\DeclareMathOperator*{\osc}{osc}
\numberwithin{equation}{section}
\newtheorem{theorem}{Theorem}[section]
\newtheorem{lemma}{Lemma}[section]
\newtheorem{remark}{Remark}[section]
\newtheorem{definition}{Definition}[section]
\newtheorem{propo}{Proposition}[section]
\newtheorem*{claim}{Claim}
\def\XXint#1#2#3{{\setbox0=\hbox{$#1{#2#3}{\int}$}
     \vcenter{\hbox{$#2#3$}}\kern-.5\wd0}}
\begin{document}

\title{ On the  continuity of solutions to the anisotropic $N$-Laplacian with $L^1$ lower order term}
\author{ Mariia O. Savchenko, Igor I. Skrypnik, Yevgeniia A. Yevgenieva
 }

\date{}
\maketitle

\begin{abstract}
We establish the continuity of bounded solutions to the anisotropic elliptic equation
$$-\sum\limits_{i=1}^N\Big(|u_{x_i}|^{p_i-2} u_{x_i}\Big)_{x_i}=f(x),\quad x\in \Omega,\quad f(x)\in L^1(\Omega)	$$
under the conditions
$$\min\limits_{1\leqslant i\leqslant N} p_i >1,\quad \sum\limits_{i=1}^N \frac{1}{p_i}=1$$
and
$$\lim\limits_{\rho\rightarrow 0}\,\sup\limits_{x\in \Omega}\int\limits^{\rho}_0\Big(\int\limits_{B_r(x)}|f(y)|\,dy\Big)^{\frac{1}{N-1}}\frac{dr}{r}=0.$$
In the standard case $p_1=...=p_N=N$, these conditions recover the known results for the $N$-Laplacian.


\textbf{MSC (2010)}: 35B65, 35D30, 35J92.

\end{abstract}

\pagestyle{myheadings} 
\thispagestyle{plain}
\markboth{Maria O. Savchenko, Igor I. Skrypnik, Yevgeniia A. Yevgenieva}
{}

\section{Introduction and main results}\label{Introduction}
For an open bounded set $\Omega\subset \mathbb{R}^N$,  $N\in\mathbb{N},\ N\geqslant2$ we consider the anisotropic elliptic equation
\begin{equation}\label{eq1.1}
-\sum\limits_{i=1}^N (a_i(x, \nabla u))_{x_i}=f(x),\quad x\in \Omega,\quad f(x)\in L^1(\Omega),
\end{equation}
where $a_i : \Omega\times \mathbb{R}^N \to \mathbb{R}$ are measurable functions satisfying the conditions
\begin{equation}\label{eq1.2}
\begin{cases}
        \sum\limits_{i=1}^N  a_i(x, \xi)\, \xi_i \geqslant K_1 \sum\limits_{i=1}^N |\xi_i |^{p_i},\quad 1<p_1\leqslant p_2\leqslant...\leqslant p_N, \quad \sum\limits_{i=1}^N \frac{1}{p_i}=1,\\
        |a_i(x, \xi)|\leqslant K_2 \Big(\sum\limits_{j=1}^N\,|\xi_j|^{p_j}\Big)^{1-\frac{1}{p_i}},\quad i=1,...,N,
    \end{cases}
\end{equation}
where $K_1$ and $K_2$ are positive constants, and $f(x)$ satisfies conditions that will be specified below.

The local boundedness,  H\"{o}lder continuity, and Harnack's inequality of weak solutions to homogeneous linear divergence-type second-order elliptic equations with measurable coefficients, but without lower order terms have been well established since the famous works of De Giorgi \cite{DeG},  Nash \cite{Nas}, and  Moser \cite{Mos1, Mos2}. Later, Ladyzhenskaya and Ural'tseva \cite{LadUra}, as well as Serrin \cite{Ser}, generalized the results of De Giorgi and Moser to the case
of quasi-linear equations of the $p$-Laplace type, incorporating lower order terms under conditions expressed in terms of $L^{t}$-spaces for $t> \dfrac{N}{p}$.
Using probabilistic techniques, Aizenman and Simon, in their renowned paper \cite{AizSim}, proved the Harnack's inequality and the continuity of weak solutions to the equation $\triangle u = V(x) u $ under the local Kato class condition on the potential $V(x)$:
\begin{equation*}
\lim\limits_{R\rightarrow 0}\,\sup\limits_{x\in \Omega}\int\limits_{B_R(x)}\frac{V(y)}{|x-y|^{N-2}}\,dy=0,\quad N \geqslant 3.
\end{equation*}
Moreover, they demonstrated that the Kato-type condition on the potential $V$ is necessary for the validity of the Harnack's inequality. Shortly thereafter, Chiarenza, Fabes, and Garofalo \cite{ChiFabGar}, as well as Kurata \cite{Kur}, developed real-variables techniques to establish the Harnack's inequality for a linear divergence-type equation with measurable coefficients and a potential from the Kato class, thereby extending the results of Aizenman and Simon.
A significant breakthrough in the nonlinear theory was achieved by Kilpel\"{a}inen and Mal\'y \cite{KilMal}, who proved the following point-wise estimate for non-negative solutions of the equation $-\triangle_p u=f(x)\geqslant 0$ in terms of the Wolff potential  $\mathcal{W}^f_{1, p}(x_0, \rho)$:
\begin{equation}\label{eq1.3}
u(x_0)\leqslant \gamma(N, p) \Big(\fint\limits_{B_{\rho}(x_0)} u^{(p-1)(1+\lambda)}\,dx\Big)^{\frac{1}{(p-1)(1+\lambda)}}+\gamma(N, p)\, \mathcal{W}^f_{1,p}(x_0, 2 \rho),
\end{equation}
where $\lambda \in \big(0, \dfrac{p-1}{N-p+1}\big)$, $p\in (1, N]$, and the Wolff potential is given by
\begin{equation}\label{eq1.4}
\mathcal{W}^f_{\alpha, p}(x_0, \rho):=\int\limits_0^{\rho}\Big(r^{\alpha p-N}\int\limits_{B_r(x_0)} f(z) dz\Big)^{\frac{1}{p-1}}\,\frac{dr}{r},\quad 
0< \alpha p\leqslant N.
\end{equation}
Estimate \eqref{eq1.3}, together with the lower bound established by Rakotoson and Ziemer \cite{RakZie}
\begin{equation*}
u(x_0)\geqslant \gamma(N, p) \mathcal{W}^f_{1, p}(x_0, \rho),
\end{equation*}
provides a precise local behavior of solutions to elliptic equations with $L^1$ data. Moreover, the method of Kilpel\"{a}inen and Mal\'y does not require the strong summability condition on the right-hand side $f$, namely $f\in L^t(\Omega)$ with $t>\dfrac{N}{p}$, and it remains valid for $L^1$ data.

The question of continuity of solutions of anisotropic operators with measurable and bounded coefficients, behaving as in \eqref{eq1.2}, remains a significant challenge even after nearly seventy years. For a comprehensive discussion of the subject and relevant references, we refer to the surveys \cite{MajVes, MinRad}. The counterexamples constructed by Giaquinta \cite{Gia} and Marcellini \cite{Mar1} demonstrate that solutions may be unbounded if the values of $p_i$ differ too significantly. The local boundedness of solutions to anisotropic elliptic  equations has been proved in \cite{Kol, BocMarSbo, CupMarMas}. It is worth noting that most of the known regularity results for anisotropic equations pertain to operators with regular (Lipschitz continuous) coefficients. For instance, see the pioneering work \cite{Mar2} as well as \cite{BouBra, BouBraLeo} and the references therein. Regarding $C^0$ regularity, surprisingly, the most extensively studied case is the anisotropic porous medium equation, both with and without lower order terms \cite{BurSkr, Hen1, Hen2, Hen3} (see also \cite{CiaGua, CiaMosVes, PirRagVes}). To the best of our knowledge, for anisotropic elliptic equations, partial results can be found in \cite{LisSkr, LiaSkrVes, CiaSkrVes} under the precise conditions on the values $p_i$:
\begin{equation*}
\begin{cases}
p_1=2 <p=p_2= ...=p_N,\\
\text{or}\\
p=p_1=...=p_s<2=p_{s+1}=...=p_N,\quad 2p+s(p-2)>0.
\end{cases}
\end{equation*}
At the same time, in \cite{DiBGiaVes}, the authors employ a more general approach, though their results are obtained under a qualitative condition on the gap between the values $p_i$, specifically if $p_N-p_1 \ll 1$.

Equation \eqref{eq1.1} without lower order term was studied in \cite{CiaHenSkr}, using the strategy from \cite{Skr1, Skr2, SkrVoi}, under the condition $\sum\limits_{i=1}^N \frac{1}{p_i}=1$, with no additional restrictions on the values $p_i$. In this paper, we establish the continuity of solutions to \eqref{eq1.1}-\eqref{eq1.2} by following the similar strategy but employing a different iteration, specifically,  the Kilpel\"{a}inen--Mal\'y technique \cite{KilMal}, properly adapted to the anisotropic equations. 


Before formulating the main results, let us remind the reader of the definition of a weak solution to equation \eqref{eq1.1}.
We extend the function $f$ by zero outside $\Omega$, i.e., on
  $\mathbb{R}^N\setminus \Omega$, and, for any $\rho >0$, set
\begin{equation}\label{eq1.5}
\mathcal{W}^{|f|}_{1,N}(\rho):=\sup\limits_{x\in \Omega}\mathcal{W}^{|f|}_{1,N}(x, \rho).
\end{equation}
Define the anisotropic Sobolev spaces $W^{1,\boldsymbol{p}}(\Omega)$ and $W^{1,\boldsymbol{p}}_0(\Omega)$ as follows
\begin{equation*}
\begin{aligned}
W^{1,\boldsymbol{p}}(\Omega):=\{u\in W^{1,1}(\Omega),\quad u_{x_i} \in L^{p_i}(\Omega),\quad i=1, ..., N\},\\
W^{1,\boldsymbol{p}}_0(\Omega):=\{u\in W^{1,1}_0(\Omega),\quad u_{x_i} \in L^{p_i}(\Omega),\quad i=1, ..., N\}.
\end{aligned}
\end{equation*}
\begin{definition} 
We say that a function $u\in W^{1,\boldsymbol{p}}(\Omega)\cap L^{\infty}(\Omega)$ is a bounded weak solution of equation \eqref{eq1.1} under conditions \eqref{eq1.2} if the following identity holds
\begin{equation}\label{eq1.6}
\sum\limits_{i=1}^N \int\limits_{\Omega}\,a_i(x, \nabla u) \frac{\partial \varphi}{\partial x_i} \,dx=\int\limits_{\Omega} f(x)\,\varphi dx,
\end{equation}
for arbitrary $\varphi\in W^{1,\boldsymbol{p}}_0(\Omega)\cap L^{\infty}(\Omega)$,\, $\varphi \geqslant 0$.
\end{definition}

Our main result in this paper reads as follows.

\begin{theorem}\label{th1.1}
Let $u$ be a bounded weak solution of \eqref{eq1.1}-\eqref{eq1.2}, and assume also that
\begin{equation}\label{eq1.7}
\lim\limits_{\rho\rightarrow 0} \mathcal{W}^{|f|}_{1, N}(\rho)=0,
\end{equation}
then $u \in C(\Omega)$.
\end{theorem}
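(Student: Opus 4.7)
Fix $x_0 \in \Omega$. Under the condition $\sum_{i=1}^N 1/p_i = 1$, the operator in \eqref{eq1.1} behaves as the isotropic $N$-Laplacian when one works on anisotropic boxes $K_\rho(x_0)$ whose side-lengths are adapted to the exponents $(p_1,\ldots,p_N)$ in the natural scaling-invariant way. The plan is to show that $\osc(u, K_\rho(x_0)) \to 0$ as $\rho \to 0$, uniformly in $x_0$, by following the anisotropic De Giorgi-type machinery developed in \cite{CiaHenSkr, Skr1, Skr2, SkrVoi} but replacing its termination step by a Kilpel\"{a}inen--Mal\'y iteration \cite{KilMal} tailored to absorb the $L^1$ right-hand side.

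The argument has three steps. \emph{Energy estimate:} Testing \eqref{eq1.6} with $\varphi = \eta^{p_N}(u-k)_\pm$ for a cutoff $\eta$ supported in $K_\rho(x_0)$, and combining \eqref{eq1.2} with Young's inequality, yields an anisotropic Caccioppoli inequality whose right-hand side contains, in addition to the usual direction-by-direction terms, the extra contribution $\|(u-k)_\pm\|_{L^\infty(K_\rho)} \int_{K_\rho}|f|\,dx$. \emph{Kilpel\"{a}inen--Mal\'y iteration:} Introduce $\rho_j = \theta^j \rho_0$ with $\theta \in (0,1)$ and levels $k_j$ defined recursively by
\begin{equation*}
k_{j+1} - k_j = \lambda M_j + \gamma \Big(\int_{K_{\rho_j}(x_0)} |f|\,dx\Big)^{\!1/(N-1)}, \qquad M_j := \bigl(\esssup_{K_{\rho_j}(x_0)} u - k_j\bigr)_+,
\end{equation*}
and prove by induction that $M_j$ decays geometrically. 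The inductive step combines the Caccioppoli estimate above with the anisotropic Sobolev embedding on $K_{\rho_j}(x_0)$ and a De Giorgi-type measure-shrinking argument on the super-level sets $\{u > k_j\}$; the specific form of the increment $k_{j+1}-k_j$ is precisely what transforms the $L^1$-norm of $f$ on $K_{\rho_j}$ into the correct local Wolff-potential contribution of exponent $1/(N-1)$. \emph{Conclusion:} Telescoping the increments yields
\begin{equation*}
\esssup_{K_{\rho_0}(x_0)} u - u(x_0) \leq C\, M_0\, \lambda^{J} + C\, \mathcal{W}^{|f|}_{1, N}(x_0, \rho_0)
\end{equation*}
for all $J \in \mathbb{N}$, and the symmetric argument applied to $-u$ produces the analogous lower bound. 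Letting first $J \to \infty$ and then $\rho_0 \to 0$, hypothesis \eqref{eq1.7} forces $\osc(u, K_{\rho_0}(x_0)) \to 0$ uniformly in $x_0$, which gives $u \in C(\Omega)$.

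\textbf{Main obstacle.} The principal difficulty sits in the iteration step. In the isotropic case $-\Delta_N u = f$, the Kilpel\"{a}inen--Mal\'y bookkeeping produces the Wolff exponent $1/(N-1)$ cleanly because a single exponent $N$ governs both the Caccioppoli and the Sobolev sides. Here one must simultaneously handle $N$ different exponents $p_i$ in the Caccioppoli inequality and pass through an anisotropic Sobolev embedding on boxes that can be sharply non-cubic when $p_N - p_1$ is large. The iteration parameters $(\theta, \lambda, \gamma)$ must then be chosen so that the recursion for $M_j$ closes with the correct exponent $1/(N-1)$ matching \eqref{eq1.5} and with constants independent of the gap $p_N - p_1$ — this is precisely the point at which the balance condition $\sum_{i=1}^N 1/p_i = 1$, rather than some weaker subcritical assumption, is used in a decisive way.
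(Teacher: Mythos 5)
Your outline diverges from the paper's proof in two structural ways, and both go to the heart of why the borderline case $\sum_{i=1}^N 1/p_i=1$ is hard.

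First, the geometry. Your boxes $K_\rho(x_0)$ depend only on $\rho$ and the exponents $p_i$, but the paper's cubes $B^{(\omega)}_\rho(x_0)$ have side lengths $\rho^{p_N/p_i}/\omega^{(p_N-p_i)/p_i}$, i.e. they depend explicitly on the current oscillation $\omega$. This intrinsic scaling is what makes the anisotropic estimates balance: after mapping $B^{(\omega)}_\rho$ to the unit cube, the rescaled functions $\tilde u^\pm=(\mu^\pm\mp u)/\omega$ solve an equation with structure constants independent of $\omega$ and $\rho$. Without the $\omega$-dependence in the box, the different directions scale differently and no iteration of the type you describe closes, regardless of how the levels $k_j$ are chosen; this has nothing to do with the $L^1$ right-hand side and is already fatal for $f\equiv0$.

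Second, the change of unknown. You propose to run a Kilpel\"{a}inen--Mal\'y iteration directly on $u$ via the test function $\eta^{p_N}(u-k)_\pm$ and prescribed increments $k_{j+1}-k_j=\lambda M_j+\gamma(\int|f|)^{1/(N-1)}$ aiming at geometric decay of $M_j$. The paper instead runs the KM iteration on the reciprocal-type function $v^\pm=\big(1/(\tilde u^\pm+\theta(\rho)/\omega)-4\big)_+$, with the bounded nonlinear test function $\big[1-(1+(v^\pm-k)_+/d)^{-\lambda}\big]\zeta^m$ producing the logarithmic energy estimate of Lemma~\ref{lem3.1}, and with the levels fixed through the balance condition $A_j(k_{j+1})=\varkappa$ rather than by a prescribed recursion. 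The goal is not geometric decay but the finite bound $\sup_{B_{1/2}}v^-\leqslant C$, which, via the measure alternative \eqref{eq3.8}/\eqref{eq3.9} and the weighted Poincar\'e inequality of Lemma~\ref{lem2.2}, translates into the dichotomy $\osc u\leqslant\beta\omega$ or $\omega\leqslant C_*\theta(\rho)$. Aiming for geometric decay of $M_j$ would amount to proving a quantitative H\"older modulus, which one should not expect under the Kato-type condition \eqref{eq1.7}; the theorem claims continuity only, and the proof is built to deliver exactly that (the modulus degenerates through $\theta(\rho)$ as $\rho\to0$). You would need to incorporate both the intrinsic $\omega$-scaling and the reciprocal substitution $v^\pm$, together with the regularisation $\theta(\rho)$ of \eqref{eq3.1}, for the plan to have a chance of closing.
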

\begin{remark}
In the case $p_1=...=p_N=N$ and $f=0$ or $f\neq 0$, we cover the known results, see, for example \cite{Mor, AlbCiaSbo, AlbFer, IwaOnn, JiaKosYan}.
\end{remark}

Now, a few words concerning the proof of the above theorem. First, for a fixed $x_0\in \Omega$ and $\rho>0$, we define the rescaled cubes
\begin{equation*}
B^{(\omega)}_\rho(x_0):=\big\{x: |x_i-x_{0,i}|\leqslant \frac{\rho^{\frac{p_N}{p_i}}}{\omega^{\frac{p_N-p_i}{p_i}}},\quad i=1,..., N\big\},\quad B^{(\omega)}_{8 \rho}(x_0)\subset \Omega,
\end{equation*}
where
\begin{equation*}
\mu^+:=\sup\limits_{B^{(\omega)}_\rho(x_0)}u,\quad \mu^-:=\inf\limits_{B^{(\omega)}_\rho(x_0)}u,\quad \mu^+-\mu^-\leqslant \omega < 2 M=2\sup\limits_{\Omega}|u|.
\end{equation*}
Such cubes are similar to the rescaled cylinders introduced by DiBenedetto for the parabolic case \cite{DiB}. Note that
\begin{equation*}
B^{(\omega)}_\rho(x_0)\subset B_\rho(x_0):=\big\{x:|x_i-x_{i,0}|\leqslant \rho,\quad i=1, ..., N\big\},
\end{equation*}
provided that $\omega\geqslant \rho$. By adapting the iteration technique of Kilpel\"{a}inen, Mal\'y  \cite{KilMal}, we establish a pointwise upper bound for the function
\begin{equation*}
v^{\pm}:=\frac{\omega}{u^{\pm}+\theta(\rho)},
\end{equation*}
with some   $\rho\leqslant \theta(\rho)\leqslant 1$, $\lim\limits_{\rho\rightarrow 0} \theta(\rho)=0$. The choice $u^+:=\mu^+-u$ or $u^-:=u-\mu^-$ depends on whether the solution $u$ attains large values compared to $\mu^+$ or $\mu^-$  in a relatively large set. Classically, the
idea of using  $log\,\, v^{\pm}$ to prove the H\"{o}lder continuity  of solutions to second-order elliptic
equations dates back to the seminal work of Moser \cite{Mos1}. For higher-order equations, a similar idea was later
adopted in \cite{Skr1, Skr2, SkrVoi} and subsequently applied to  elliptic anisotropic equations without lower order terms \cite{CiaHenSkr}. Here, we demonstrate how this strategy can be adapted to the fully anisotropic case with a singular lower order term, following the ideas  of Kilpel\"{a}inen--Mal\'y.

Having established the local boundedness of the function $v^{\pm}$, we  apply the weighted Poincar\'e inequality (see Lemma \ref{lem2.2} below) to obtain the uniform bound 
$$\sup\limits_{B^{(\omega)}_{\frac{\rho}{2}}(x_0)}\,\, v^{\pm}\leqslant C,$$
with some constant $C>0$ independent of $\rho$, $\omega$,
provided that  $\theta(\rho)$ is chosen according to the condition
$$[\theta(\rho)]^{\gamma_0}:=\sum\limits_{i=1}^N \Big(\int\limits_{B_{\rho}(x_0)}|u_{x_i}|^{p_i}\,dx\Big)^{\frac{1}{p_i}}+\mathcal{W}^{|f|}_{1,N}(2 \rho)+\rho,$$
with some sufficiently large $\gamma_0>0$. Consequently, we obtain
\begin{equation*}
\osc\limits_{B^{(\omega)}_{\frac{\rho}{2}}(x_0)}\,u\leqslant \big(1-\frac{1}{C}\big)\,\omega +\theta(\rho),
\end{equation*}
from which, by iteration, the required Theorem \ref{th1.1} follows.

The rest of the paper is devoted to the proof of Theorem \ref{th1.1}

\vskip 2mm
{\bf Acknowledgements.}  
The first author is supported by a grant of the National Academy of Sciences of Ukraine (project numbers is 0125U001647).

\section{Preliminaries}

In this section, we present two auxiliary lemmas that will be used throughout the paper. The first is the Sobolev embedding lemma \cite{Ven, Tro}.

\begin{lemma}\label{lem2.1}
Let $E$ be an open bounded set in $\mathbb{R}^N$ and let $u\in W^{1,\boldsymbol{p}}_0(E)$. Then, for any $q> 1$, the following inequality holds:
\begin{equation}\label{eq2.1}
\|u\|_{L^q(E)}\leqslant c |E|^{\frac{1}{q}}\,\prod\limits_{i=1}^N \|u_{x_i}\|^{\frac{1}{N}}_{L^{p_i}(E)},\quad \sum\limits_{i=1}^N \frac{1}{p_i}=1,
\end{equation}
where $c$ is a positive constant that depends only on $N$, $p_1$, ..., $p_N$.
\end{lemma}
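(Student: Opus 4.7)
The plan is to deduce the inequality by interpolation from the sub-critical anisotropic Sobolev (Troisi) embedding. Since $\sum_{i=1}^N 1/p_i=1$ places us exactly on the critical line, lowering each exponent slightly moves the problem into the sub-critical regime where a clean Gagliardo--Nirenberg type bound is available, and the cost of this lowering can be paid by a single H\"older step on the bounded set $E$, producing precisely the factor $|E|^{1/q}$.

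Concretely, for a fixed $q>N/(N-1)$ I would first choose auxiliary exponents $q_i\in(1,p_i)$ satisfying
$$\sum_{i=1}^N\frac{1}{q_i}=1+\frac{N}{q}.$$
Such a choice exists because $\sum 1/p_i=1$ and $1+N/q<N$, so the extra mass $N/q$ can be distributed among the $N$ coordinates without pushing any $q_i$ down to $1$. Since $\sum 1/q_i>1$, the classical sub-critical anisotropic embedding from \cite{Ven, Tro}, applied to $u\in W^{1,1}_0(E)$ with $u_{x_i}\in L^{q_i}(E)$, delivers
$$\|u\|_{L^q(E)}\le c\prod_{i=1}^N\|u_{x_i}\|_{L^{q_i}(E)}^{1/N},$$
with $q$ equal to the critical Troisi exponent $N/(\sum 1/q_i-1)$. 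Because $q_i<p_i$, H\"older's inequality on $E$ then gives
$$\|u_{x_i}\|_{L^{q_i}(E)}\le |E|^{\frac{1}{q_i}-\frac{1}{p_i}}\,\|u_{x_i}\|_{L^{p_i}(E)},$$
and taking the product pulls out the total weight
$$|E|^{\frac{1}{N}\sum_{i=1}^N(1/q_i-1/p_i)}=|E|^{\frac{1}{N}\cdot\frac{N}{q}}=|E|^{1/q},$$
which is exactly the factor required. The residual range $1<q\le N/(N-1)$ is easily handled by first establishing the inequality at some fixed $q_0>N/(N-1)$ and then descending through H\"older, $\|u\|_{L^q(E)}\le|E|^{1/q-1/q_0}\|u\|_{L^{q_0}(E)}$, which absorbs cleanly into an $|E|^{1/q}$ prefactor.

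The entire argument thus rests on the sub-critical Troisi inequality, which is the only genuinely non-trivial ingredient and is imported here from \cite{Ven, Tro}; everything else is exponent bookkeeping. A self-contained proof of Troisi would be the main obstacle, and it would proceed by applying the one-dimensional fundamental theorem of calculus in each coordinate direction, combining the resulting slices through the Gagliardo--Nirenberg trick, and balancing the exponents via iterated H\"older steps. The hypothesis $\sum 1/q_i>1$ is precisely what makes this balancing close, and it is also the reason the auxiliary (rather than the original) exponents have to be used.
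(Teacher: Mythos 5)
The paper does not prove Lemma~\ref{lem2.1}; it simply cites \cite{Ven, Tro}, so there is no in-paper argument to compare against. Your derivation is correct and is in fact the standard way to get the bounded-domain, critical-exponent version from Troisi's subcritical embedding: you pick auxiliary exponents $q_i\in(1,p_i)$ with $\sum 1/q_i=1+N/q$ (possible precisely when $N/q<N-1$, i.e.\ $q>N/(N-1)$), apply the subcritical anisotropic Gagliardo--Nirenberg--Sobolev inequality for $W^{1,\boldsymbol q}_0$, and pay for the exponent lowering by H\"older on the set of finite measure, the bookkeeping $\frac{1}{N}\sum_i(1/q_i-1/p_i)=1/q$ giving exactly the $|E|^{1/q}$ prefactor. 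The descent to $1<q\le N/(N-1)$ via a further H\"older step is also fine, since $|E|^{1/q-1/q_0}\cdot|E|^{1/q_0}=|E|^{1/q}$.

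One point worth flagging: in your construction the constant $c$ inevitably depends on the chosen tuple $(q_i)$ and hence on $q$, whereas the lemma as written asserts $c=c(N,p_1,\dots,p_N)$ only. This is not a flaw in your argument but a genuine (if minor) imprecision in the paper's statement: on the critical line $\sum 1/p_i=1$ the constant \emph{must} depend on $q$, since $W^{1,\boldsymbol p}_0(E)$ does not embed into $L^\infty(E)$ and one can check (e.g.\ via logarithmic test functions in the isotropic case $p_i\equiv N$) that $\sup_u \|u\|_{L^q}/(|E|^{1/q}\prod\|u_{x_i}\|_{L^{p_i}}^{1/N})$ grows unboundedly as $q\to\infty$. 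This is harmless where the lemma is applied in the paper, because there $q$ is a single fixed exponent determined by $\lambda,p_1,p_N$ and thus by the data alone.
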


The next lemma introduces well-known weighted Poincar\'e inequality (see \cite[Proposition~2.1]{DiB}).

\begin{lemma}\label{lem2.2}
Let $u \in W^{1,q}(B_{\rho}(y))$ for some $y\in \mathbb{R}^N$,  $\rho>0$, and $q \geqslant 1$. Then, there exists a constant $\gamma>0$, independent of $u$ and $\rho$, such that the following inequality holds:
\begin{equation}\label{eq2.2}
    \|u\|_{L^{q}(B_{\rho}(y))} \leqslant \gamma \quad \frac{|B_{\rho}(y)|}{\big|\{u=0\} \cap B_{\rho}(y)\big|^{\frac{N-1}{N}}} \| \nabla u\|_{L^{q}(B_{\rho}(y))}.
\end{equation}
\end{lemma}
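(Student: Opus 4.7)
The strategy is to first establish the borderline case $q=1$ by combining the scale-invariant Sobolev--Poincaré inequality with Hölder's inequality on the zero set, and then to bootstrap to general $q\geqslant 1$ by applying the $q=1$ result to $|u|^{q}$.

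\textbf{Step 1 (the case $q=1$).} Set $A:=\{u=0\}\cap B_{\rho}(y)$ and let $(u)_{B_{\rho}}:=|B_{\rho}(y)|^{-1}\int_{B_{\rho}(y)}u\,dx$ denote the integral mean. Since $u\equiv 0$ on $A$, Hölder's inequality with exponents $N$ and $N/(N-1)$ on the set $A$ gives
\begin{equation*}
|(u)_{B_{\rho}}|\,|A|=\int_{A}|u-(u)_{B_{\rho}}|\,dx\leqslant |A|^{\frac{1}{N}}\,\|u-(u)_{B_{\rho}}\|_{L^{N/(N-1)}(B_{\rho}(y))}.
\end{equation*}
The (scale-invariant) Sobolev--Poincaré estimate $\|u-(u)_{B_{\rho}}\|_{L^{N/(N-1)}(B_{\rho}(y))}\leqslant c(N)\,\|\nabla u\|_{L^{1}(B_{\rho}(y))}$ then yields $|(u)_{B_{\rho}}|\leqslant c\,|A|^{-(N-1)/N}\|\nabla u\|_{L^{1}(B_{\rho}(y))}$. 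A second application of Hölder bounds $\|u-(u)_{B_{\rho}}\|_{L^{1}(B_{\rho}(y))}$ by $|B_{\rho}(y)|^{1/N}\|u-(u)_{B_{\rho}}\|_{L^{N/(N-1)}(B_{\rho}(y))}$, so by the triangle inequality and the elementary observation $|B_{\rho}|^{1/N}\leqslant |B_{\rho}|/|A|^{(N-1)/N}$ (which follows from $|A|\leqslant |B_{\rho}|$), one arrives at
\begin{equation*}
\|u\|_{L^{1}(B_{\rho}(y))}\leqslant c(N)\,\frac{|B_{\rho}(y)|}{|A|^{(N-1)/N}}\,\|\nabla u\|_{L^{1}(B_{\rho}(y))}.
\end{equation*}

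\textbf{Step 2 (general $q\geqslant 1$).} For $u\in W^{1,q}(B_{\rho}(y))$ the chain rule yields $|u|^{q}\in W^{1,1}(B_{\rho}(y))$ with $|\nabla|u|^{q}|=q\,|u|^{q-1}|\nabla u|$, and the zero set of $|u|^{q}$ is exactly $A$. Applying Step~1 to $|u|^{q}$ and then using Hölder's inequality on the right-hand side with exponents $q/(q-1)$ and $q$ gives
\begin{equation*}
\int_{B_{\rho}(y)}|u|^{q}\,dx\leqslant c\,q\,\frac{|B_{\rho}(y)|}{|A|^{(N-1)/N}}\,\|u\|_{L^{q}(B_{\rho}(y))}^{q-1}\,\|\nabla u\|_{L^{q}(B_{\rho}(y))}.
\end{equation*}
Dividing both sides by $\|u\|_{L^{q}(B_{\rho}(y))}^{q-1}$ (the trivial case $u\equiv 0$ aside) produces the lemma with $\gamma=c(N)\,q$.

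\textbf{Main obstacle.} The calculation is essentially routine; the two points requiring mild care are (i) invoking the scale-invariant Sobolev--Poincaré inequality rather than the plain Poincaré inequality, so that no spurious $\rho$-dependence enters the constant, and (ii) justifying the chain rule identity for $|u|^{q}$ when $u$ is only in $W^{1,q}$, which is handled by a standard mollification and truncation at $\{|u|\leqslant \varepsilon\}$ with $\varepsilon\to 0$.
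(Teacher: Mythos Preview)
Your proof is correct. The paper does not actually supply a proof of this lemma; it simply cites \cite[Proposition~2.1]{DiB} (DiBenedetto, \emph{Degenerate Parabolic Equations}) for the result, so there is nothing in the paper to compare your argument against line by line. Your route---Sobolev--Poincar\'e combined with H\"older on the zero set to pin down the mean value, then bootstrapping from $q=1$ to general $q$ via the chain rule on $|u|^{q}$---is one of the standard derivations of this inequality and delivers the stated estimate with a constant $\gamma=c(N)\,q$ independent of $u$ and $\rho$, as required. The two technical caveats you flag (scale invariance of the constant, chain rule for $|u|^{q}$ with $u\in W^{1,q}$) are exactly the right ones and are handled in the manner you indicate.
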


\section{Oscillation Estimates}

For fixed $x_0\in \Omega$, $\rho$, $\omega >0$ construct the cube
$$
B^{(\omega)}_\rho(x_0):=\big\{x: |x_i-x_{0,i}|\leqslant \frac{\rho^{\frac{p_N}{p_i}}}{\omega^{\frac{p_N-p_i}{p_i}}},\quad i=1,..., N\big\},\quad B^{(\omega)}_{8 \rho}(x_0)\subset \Omega
$$
and assume also that
$$
\mu^+:=\sup\limits_{B^{(\omega)}_\rho(x_0)}u,\quad \mu^-:=\inf\limits_{B^{(\omega)}_\rho(x_0)}u,\quad \mu^+-\mu^-\leqslant \omega < 2 M.
$$

 To prove Theorem~\ref{th1.1}, which addresses the continuity of the solution to equation \eqref{eq1.1}--\eqref{eq1.2}, it is crucial to establish appropriate estimates for the oscillation of the solution. Proposition~\ref{pr3.1} plays a key role in this analysis.

First, we introduce the following value $\theta(\rho)\in[\rho, 1]$, which will be used frequently throughout this section:
\begin{equation}\label{eq3.1}
\big[\theta(\rho)\big]^{\gamma_0}:=\sum\limits_{i=1}^N \biggr(\int\limits_{B_{\rho}(x_0)}|u_{x_i}|^{p_i}\,dx\biggr)^{\frac{1}{p_i}}+ \mathcal{W}^{|f|}_{1,N}(2\rho)+\rho,
\end{equation}
where $B_{\rho}(x_0):=\{x:|x_i-x_{i,0}|<\rho, i=1, ..., N\}$ and $\gamma_0$ is a fixed positive constant to be determined later. 

\begin{propo}\label{pr3.1}
Let $u$ be a bounded weak solution to \eqref{eq1.1}--\eqref{eq1.2}. Then, there exist numbers $\beta\in (0,1)$ and $C_* >1$ depending only on the data such that either
\begin{equation}\label{eq3.2}
\omega \leqslant C_* \,\theta(\rho),
\end{equation}
or
\begin{equation}\label{eq3.3}
\osc\limits_{B^{(\omega)}_{\frac{\rho}{2}}(x_0)} u\leqslant \beta\,\omega.
\end{equation}
\end{propo}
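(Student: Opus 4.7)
The plan is to adapt the Kilpeläinen--Malý iteration to the anisotropic setting and combine it with Lemma \ref{lem2.2} to produce a uniform pointwise bound on an auxiliary reciprocal function, from which the dichotomy \eqref{eq3.2}--\eqref{eq3.3} follows by direct algebra.

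I would begin with a measure-theoretic dichotomy: up to replacing $u$ by $-u$, one may assume that
$$\bigl|\{u \ge (\mu^+ + \mu^-)/2\} \cap B^{(\omega)}_\rho(x_0)\bigr| \ge \tfrac12\,|B^{(\omega)}_\rho(x_0)|.$$
Set $u^+ := \mu^+ - u$ and introduce
$$v := \frac{\omega}{u^+ + \theta(\rho)},$$
so that $1 \le v \le \omega/\theta(\rho)$ and the density information above becomes $|\{v \le 2\} \cap B^{(\omega)}_\rho(x_0)| \ge \tfrac12\,|B^{(\omega)}_\rho(x_0)|$. The whole proof then reduces to showing $\sup_{B^{(\omega)}_{\rho/2}(x_0)} v \le C_0$ for some data-dependent $C_0$, since this inequality yields $\mu^+ - u + \theta(\rho) \ge \omega/C_0$ on $B^{(\omega)}_{\rho/2}(x_0)$, giving either \eqref{eq3.2} with $C_* := 2C_0$ or \eqref{eq3.3} with $\beta := 1 - 1/(2C_0)$.

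The analytic heart of the argument is a Caccioppoli-type inequality for $v$. Testing the weak formulation \eqref{eq1.6} against $\varphi := \eta^{p_N}(u^+ + \theta(\rho))^{1-s}$ with $s > 1$ and a smooth cutoff $\eta$ supported in a slightly larger cube, and invoking \eqref{eq1.2}, I would arrive at an estimate of the schematic form
\begin{equation*}
\sum_{i=1}^N \int \eta^{p_N}\,|v_{x_i}|^{p_i}\,v^{s-p_i-1}\,dx \;\le\; \gamma\sum_{i=1}^N \int |\eta_{x_i}|^{p_i}\,v^{s-1}\,dx + \gamma\int \eta^{p_N}\,\frac{|f|\,v^{s-1}}{u^+ + \theta(\rho)}\,dx.
\end{equation*}
Lemma \ref{lem2.1} then converts such a gradient bound into an $L^q$ bound for a higher power of $v$ on an inner cube. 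The second term on the right is the new ingredient: using the Kilpeläinen--Malý truncation $(v-k)_+$, the integration is restricted to the set $\{u^+ + \theta(\rho) \le \omega/k\}$, so this term is dominated by $k^{s+N-2}\omega^{-(N-1)}\int_{B_r(x_0)} |f|\,dy$; summing over a geometric sequence of levels yields control by the Wolff potential $\mathcal{W}^{|f|}_{1,N}(2\rho)$.

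With these ingredients in place, I would run the Kilpeläinen--Malý iteration along shrinking cubes $\rho_j \downarrow \rho/2$ and truncation levels $k_j \to \infty$, using the initial density $|\{v \le 2\}| \gtrsim |B^{(\omega)}_\rho(x_0)|$ through Lemma \ref{lem2.2} to bypass subtraction of averages at each step. Choosing $\gamma_0$ in \eqref{eq3.1} sufficiently large forces the accumulated error contributed by the cutoff terms (which collapse into $\sum_i \|u_{x_i}\|_{L^{p_i}(B_\rho(x_0))}$) and by $\mathcal{W}^{|f|}_{1,N}(2\rho)$ to stay below a fixed fraction of each target level, and a De Giorgi-type convergence lemma then yields $\sup_{B^{(\omega)}_{\rho/2}(x_0)} v \le C_0$, completing the argument as explained above. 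The main obstacle is closing this iteration in the anisotropic setting: the $N$ distinct scaling exponents $p_i$ interact nontrivially with the non-Euclidean geometry of the cubes $B^{(\omega)}_\rho(x_0)$, and calibrating the anisotropic Caccioppoli and Sobolev inequalities against the isotropic Wolff potential of $f$ is precisely what determines the admissible value of $\gamma_0$ in \eqref{eq3.1}.
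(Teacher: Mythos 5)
Your overall roadmap matches the paper's strategy: rescale to a reference cube, work with the reciprocal auxiliary function $v$, use the measure alternative together with Lemma~\ref{lem2.2} to bypass averaging, run a Kilpel\"{a}inen--Mal\'y-type iteration, and calibrate $\gamma_0$ in \eqref{eq3.1} so the Wolff potential and the $L^{p_i}$-norms of $u_{x_i}$ are absorbed. However, the analytic core contains a genuine gap.

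The test function $\varphi := \eta^{p_N}(u^+ + \theta(\rho))^{1-s}$, $s>1$, is a Moser-type power test function. It is \emph{unbounded} as $u^+ \to 0$ (precisely the region you need to explore), and in terms of $v$ it produces an $f$-term of order $\omega^{-1}\int \eta^{p_N}|f|\,v^{s}\,dx$. Restricting to $\{v>k\}$ does not make this term scale like $k^{s+N-2}\int|f|$: on $\{v>k\}$ the factor $v^{s}$ has no a priori upper bound, which is exactly what you are trying to prove. Iterating over $s$ amounts to a Moser iteration, which does not close for $f\in L^1$; that is the very obstruction the Kilpel\"{a}inen--Mal\'y method is designed to avoid. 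The paper instead tests with $\varphi=\big[1-(1+(v^{\pm}-k)_+/d)^{-\lambda}\big]\zeta^m$, which is bounded between $0$ and $1$ \emph{uniformly} in $k$, $d$, $\lambda$. As a consequence the $f$-term in the resulting energy inequality \eqref{eq3.7} is simply $\gamma d\,\theta(\rho)^{-2(p_N-1)}\int_{B_r^{(d)}}|\tilde f|\,\zeta^m\,dx$ --- with no power of $v$ --- and this is what allows summing over shrinking radii $r_j$ to produce the Wolff potential $\mathcal{W}^{|f|}_{1,N}$. Your proposal mentions ``the Kilpel\"{a}inen--Mal\'y truncation $(v-k)_+$'' as a separate ingredient, but it is not reconciled with the power test function; these are competing constructions, not complementary ones.

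A second, more structural gap: the Kilpel\"{a}inen--Mal\'y iteration is not merely ``shrinking cubes plus truncation levels tending to infinity''. The paper constructs, for each $j$, a functional $A_j(k)$ (see \eqref{eq3.11}) and chooses the level $k_{j+1}$ so that either $k_{j+1}=k_j+\frac12\delta_{j-1}$ or $A_j(k_{j+1})=\varkappa$; the crucial recursion in Lemma~\ref{lem3.2}, namely $\delta_j \leqslant \tfrac12\delta_{j-1} + \gamma r_j + \gamma\big(\theta(\rho)^{-\gamma_1}\int_{B_{r_{j-1}}}|\tilde f|\,dx\big)^{1/(N-1)}$, is obtained by splitting $A_j(k_{j+1})$ into a ``small-$v$'' part and a ``large-$v$'' part and tuning two parameters $\epsilon$, $\varkappa$ against each other. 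Your proposal does not indicate how such a recursion would arise from the Caccioppoli inequality you wrote, nor how the summation closes to give $\sup_{B_{1/2}} v^- \leqslant C$; and without that, there is no way to determine the admissible $\gamma_0$, which you yourself flag as the delicate point. The weighted Poincar\'e step (your use of Lemma~\ref{lem2.2} to seed the iteration from the measure alternative) is correct and matches the paper.
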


 The rest of this section is dedicated to the proof of Proposition~\ref{pr3.1}, which relies on the well-known Kilpel\"{a}inen--Mal\'y technique introduced in \cite{KilMal}. To apply this method, we begin in Section~3.1 by performing a change of variables. In 3.2, we establish and prove the key analytical tool -- local energy estimates -- for the resulting rescaled functions. Section~3.3 identifies and describes two possible alternatives that arise during the proof. In 3.4, we construct a sequence of levels $k_j$, a central element of the Kilpel\"{a}inen--Mal\'y iteration technique. The core result of this technique, Lemma~\ref{lem3.2}, is presented and proved in 3.5. Finally, in Sections 3.6--3.8, we complete the proof of Proposition~\ref{pr3.1}, providing the necessary estimates.

\subsection{ Change of Variables}
We introduce the change of variables 
\begin{equation*}
x_i \rightarrow (x_i-x_{i, 0})\frac{\omega^{\frac{p_N-p_i}{p_i}}}{\rho^{\frac{p_N}{p_i}}},\quad i=1, ..., N
\end{equation*}
which maps the cube $B^{(\omega)}_{\rho}(x_0)$ into
$B_1:=\{x : |x_i| <1,\quad i=1, ..., N\}.$
Denoting again with $x$ the new variables, the rescaled functions

$$\tilde{u}^{-}:=\frac{\tilde{u}-\mu^-}{\omega},\quad \text{and}\quad  \tilde{u}^{+}:=\frac{\mu^+-\tilde{u}}{\omega}$$
are bounded nonnegative weak solutions to the equation
\begin{equation}\label{eq3.4}
-\sum\limits_{i=1}^N \left(\tilde{a}_i(x, \nabla \tilde{u}^{\pm})\right)_{x_i}=\tilde{f}(x),\quad x\in B_1,
\end{equation}
where $\tilde{f}(x):=\Big(\dfrac{\rho}{\omega}\Big)^{p_N} \big(x_{1,0}+x_1\frac{\rho^{\frac{pN}{p_1}}}{\omega^{\frac{p_N-p_1}{p_1}}},...,x_{N,0}+x_N\rho\big)$ and $\tilde{a}_i$ satisfy the conditions
\begin{equation}\label{eq3.5}
\begin{cases}
        \sum\limits_{i=1}^N \tilde{a}_i(x, \nabla \tilde{u}^{\pm}) \, \frac{\partial \tilde{u}^{\pm}}{\partial x_i} \geqslant K_1 \sum\limits_{i=1}^N \big|\frac{\partial \tilde{u}^{\pm}}{\partial x_i} \big|^{p_i},\\
        |\tilde{a}_i(x, \nabla \tilde{u}^{\pm})|\leqslant K_2 \Big(\sum\limits_{j=1}^N\,\big|\frac{\partial \tilde{u}^{\pm}}{\partial x_j}\big|^{p_j}\Big)^{1-\frac{1}{p_i}},\quad i=1,...,N.
    \end{cases}
\end{equation}
Moreover, the following integral identity holds
\begin{equation}\label{eq3.6}
\sum\limits_{i=1}^N \int\limits_{B_1} \tilde{a}_i(x, \nabla \tilde{u}^{\pm})\,\nabla \varphi\,dx=\int\limits_{B_1}\tilde{f}\,\varphi\,dx
\end{equation}
for any nonnegative function $\varphi \in W^{1,\boldsymbol{p}}_0(B_1)\cap L^{\infty}(B_1)$.

\subsection{Local Energy Estimates}
The following notation will be used in the sequel
$$ v^{\pm}(x):=\left(\frac{1}{\tilde{u}^{\pm}(x)+\theta(\rho)/\omega}-4\right)_{+}=\max\left\{\left(\frac{1}{\tilde{u}^{\pm}(x)+\theta(\rho)/\omega}-4\right), 0\right\}.$$
\begin{lemma}\label{lem3.1}
Let $\tilde{u}^{\pm}$ be bounded nonnegative weak solutions to \eqref{eq3.4}. For any $\lambda > 0$,  $m\geqslant p_N$,  any $k$, $d >0$, any $0<r \leqslant \dfrac{1}{2}$, any $0<\theta(\rho)\leqslant 1$, any set 
$$B^{(d)}_r(y):=\big\{x: |x_i-y_i|< \frac{r^{\frac{p_N}{p_i}}}{d^{\frac{p_N-p_i}{p_i}}},\quad i=1,..., N\big\}\subset B_1,$$
and any $\zeta \in C^1_0( B^{(d)}_r(y))$, $0\leqslant \zeta \leqslant 1$, $\zeta=1$ in $B^{(d)}_{\frac{r}{2}}(y)$, $|\zeta_{x_i}|\leqslant 2^{\frac{p_N}{p_i}+1} \dfrac{d^{\frac{p_N-p_i}{p_i}}}{r^{\frac{p_N}{p_i}}}$, $i=1, ..., N$ there exists  a constant $\gamma >0$ depending only on $\lambda$, $m$, $N$, $p_1$,..., $p_N$, $K_1$, $K_2$ and $M:=\sup\limits_{\Omega} |u|$, such that if
$$\omega \geqslant \theta(\rho),$$
then
\begin{multline}\label{eq3.7}
\sum\limits_{i=1}^N\,\int\limits_{B^{(d)}_{r}(y)\cap\{v^{\pm}>k\}}\frac{|v^{\pm}_{x_i}|^{p_i}}{(1+\frac{v^{\pm}-k}{d})^{1+\lambda}}\zeta^m\,dx
 \\\leqslant\gamma \Big(\frac{d}{r}\Big)^{p_N}\,[\theta(\rho)]^{-2(p_N-1)}\, \int\limits_{B^{(d)}_r(y)\cap\{v^{\pm}>k\}}\Big(1+\frac{v^{\pm}-k}{d}\Big)^{(1+\lambda)(p_N-1)}\,\zeta^{m-p_N}\,dx\\+\gamma d\,[\theta(\rho)]^{-2(p_N-1)}\int\limits_{B^{(d)}_r(y)} |\tilde{f}(x)|\,\zeta^m dx.
\end{multline}

\end{lemma}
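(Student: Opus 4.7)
I would insert into \eqref{eq3.6} the nonnegative test function
$$\varphi := \Phi(v^\pm)\zeta^m,\qquad \Phi(s) := \frac{d}{\lambda}\Bigl(1 - \bigl(1 + (s-k)/d\bigr)^{-\lambda}\Bigr)_{+},$$
so designed that $\Phi$ is Lipschitz, vanishes on $\{v^\pm\leqslant k\}$, satisfies $\Phi\leqslant d/\lambda$, and has $\Phi'(s)=\bigl(1+(s-k)/d\bigr)^{-(1+\lambda)}$ for $s>k$---precisely the weight appearing on the LHS of \eqref{eq3.7}. Write $w:=\tilde{u}^\pm+\theta(\rho)/\omega$, so that $v^\pm=(w^{-1}-4)_+$ and $\partial_i v^\pm=-w^{-2}\partial_i\tilde{u}^\pm$ on $\{v^\pm>0\}$. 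The chain rule together with the coercivity in \eqref{eq3.5} then turns \eqref{eq3.6} into
$$K_1\int w^{-2}\Phi'(v^\pm)\zeta^m\sum_{i=1}^N|\partial_i\tilde{u}^\pm|^{p_i}\,dx \;\leqslant\; m\sum_{i=1}^N\int|\tilde{a}_i|\,\Phi(v^\pm)\zeta^{m-1}|\partial_i\zeta|\,dx + \int|\tilde{f}|\Phi(v^\pm)\zeta^m\,dx.$$

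On $\{v^\pm>k\}$ the identity $|\partial_i\tilde{u}^\pm|^{p_i}=w^{2p_i}|\partial_i v^\pm|^{p_i}$ recasts the LHS as $K_1\sum_i\int w^{2(p_i-1)}\Phi'(v^\pm)|\partial_i v^\pm|^{p_i}\zeta^m\,dx$. The hypothesis $\omega\geqslant\theta(\rho)$ together with $\omega<2M$ gives $w\geqslant\theta(\rho)/(2M)$; combined with $\theta(\rho)\leqslant 1$ and $p_i\leqslant p_N$ this yields $w^{2(p_i-1)}\geqslant c(M,p_N)[\theta(\rho)]^{2(p_N-1)}$, which after division supplies the singular factor $[\theta(\rho)]^{-2(p_N-1)}$ present in \eqref{eq3.7}. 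For the right-hand side I would apply the growth bound $|\tilde{a}_i|\leqslant K_2\bigl(\sum_j|\partial_j\tilde{u}^\pm|^{p_j}\bigr)^{1-1/p_i}$ and then Young's inequality with conjugate exponents $p_i/(p_i-1)$ and $p_i$, calibrated so that the leading contribution reads $\epsilon\,w^{-2}\Phi'(v^\pm)\zeta^m\sum_j|\partial_j\tilde{u}^\pm|^{p_j}$ and is absorbed into the LHS for $\epsilon$ sufficiently small.

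The residual error from Young's inequality takes the form
$$C\sum_{i=1}^N\int w^{2(p_i-1)}\Phi(v^\pm)^{p_i}\bigl[\Phi'(v^\pm)\bigr]^{-(p_i-1)}\zeta^{m-p_i}|\partial_i\zeta|^{p_i}\,dx,$$
which I would estimate using $\Phi\leqslant d/\lambda$, $[\Phi']^{-(p_i-1)}=(1+(v^\pm-k)/d)^{(1+\lambda)(p_i-1)}$, the cutoff bound $|\partial_i\zeta|^{p_i}\leqslant Cd^{p_N-p_i}/r^{p_N}$, the upper bound $w\leqslant 2$ (since $\tilde{u}^\pm\leqslant 1$ and $\theta(\rho)/\omega\leqslant 1$), and the monotonicity facts $\zeta^{m-p_i}\leqslant\zeta^{m-p_N}$ (valid for $\zeta\leqslant 1$ and $m\geqslant p_N\geqslant p_i$) and $(1+(v^\pm-k)/d)^{(1+\lambda)(p_i-1)}\leqslant(1+(v^\pm-k)/d)^{(1+\lambda)(p_N-1)}$; each summand is then controlled by $C(d/r)^{p_N}(1+(v^\pm-k)/d)^{(1+\lambda)(p_N-1)}\zeta^{m-p_N}$. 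The forcing term is trivially dominated by $(d/\lambda)\int|\tilde{f}|\zeta^m\,dx$. Assembling these estimates and dividing by $[\theta(\rho)]^{2(p_N-1)}$ yields \eqref{eq3.7}. The principal difficulty is that the weight $w^{2(p_i-1)}$ arising from the change of variable $\tilde{u}^\pm\mapsto v^\pm$ depends on $i$, whereas \eqref{eq3.7} demands a single uniform prefactor; the resolution is to let the worst case $p_i=p_N$ dictate the exponent, exploiting the two-sided bound $\theta(\rho)/(2M)\leqslant w\leqslant 2$, and to balance the Young exponents with the cutoff scaling so the residual takes precisely the form displayed on the right-hand side of \eqref{eq3.7}.
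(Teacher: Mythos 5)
Your argument is correct and is essentially the paper's own proof: the authors test \eqref{eq3.6} with $\varphi=\bigl[1-\bigl(1+\tfrac{(v^\pm-k)_+}{d}\bigr)^{-\lambda}\bigr]\zeta^m$, which is your $\tfrac{\lambda}{d}\Phi(v^\pm)\zeta^m$, and then carry out the same chain-rule rewrite $\partial_i\tilde{u}^\pm=-w^2\partial_i v^\pm$, the same Young-inequality absorption of the cross term, and the same use of the two-sided bound $\theta(\rho)/(2M)\leqslant w\leqslant 2$ together with $p_1\leqslant\dots\leqslant p_N$ to collapse all the $w^{2(p_i-1)}$ and $\zeta^{m-p_i}$ factors into the uniform $[\theta(\rho)]^{-2(p_N-1)}$, $\zeta^{m-p_N}$ appearing in \eqref{eq3.7}. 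The only difference is the harmless normalization constant $d/\lambda$ in the test function, which has no effect on the final estimate.
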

\begin{proof}
Test identity \eqref{eq3.6} by
$$\varphi=\left[1-\left(1+\frac{(v^{\pm}-k)_+}{d}\right)^{-\lambda}\right]\,\zeta^m,$$
using conditions \eqref{eq3.5} and the fact that $0\leqslant \varphi \leqslant 1,$ we obtain
\begin{multline*}
\frac{\lambda K_1 }{d}\sum\limits_{i=1}^N\,\int\limits_{B^{(d)}_{r}(y)\cap\{v^{\pm}>k\}}\frac{|\tilde{u}^{\pm}_{x_i}|^{p_i}\,[\tilde{u}^{\pm}+\theta(\rho)/\omega]^{-2}}{\big(1+\frac{v^{\pm}-k}{d}\big)^{1+\lambda}}\zeta^m\,dx\\\leqslant K_2\,m \sum\limits_{i=1}^N\,\int\limits_{B^{(d)}_{r}(y)\cap\{v^{\pm}>k\}} \left(\sum\limits_{j=1}^N|\tilde{u}^{\pm}_{x_j}|^{p_j}\right)^{1-\frac{1}{p_i}}\,|\zeta_{x_i}|\zeta^{m-1}\,dx + \int\limits_{B^{(d)}_r(y)} |\tilde{f}|\,\zeta^m\, dx.
\end{multline*}
Applying Young's inequality, we arrive at the estimate
\begin{multline*}
\gamma\sum\limits_{i=1}^N\,\int\limits_{B^{(d)}_{r}(y)\cap\{v^{\pm}>k\}}\frac{|v^{\pm}_{x_i}|^{p_i}\,[\tilde{u}^{\pm}+\theta(\rho)/\omega]^{2(p_i-1)}}{\big(1+\frac{v^{\pm}-k}{d}\big)^{1+\lambda}}\zeta^m\,dx\\=
\sum\limits_{i=1}^N\,\int\limits_{B^{(d)}_{r}(y)\cap\{v^{\pm}>k\}}\frac{|u_{x_i}|^{p_i}\,[\tilde{u}^{\pm}+\theta(\rho)/\omega]^{-2}}{\big(1+\frac{v^{\pm}-k}{d}\big)^{1+\lambda}}\zeta^m\,dx\\\leqslant \gamma \Big(\frac{d}{r}\Big)^{p_N}\sum\limits_{i=1}^N\int\limits_{B^{(d)}_{r}(y)\cap\{v^{\pm}>k\}}[\tilde{u}^{\pm}+\theta(\rho)/\omega]^{2(p_i-1)}\Big(1+\frac{v^{\pm}-k}{d}\Big)^{(1+\lambda)(p_i-1)}\,\zeta^{m-p_N}\,dx\\+\gamma d\,\int\limits_{B^{(d)}_r(y)} |\tilde{f}|\,\zeta^m\, dx.
\end{multline*}
By the fact that $0\leqslant \tilde{u}^{\pm}\leqslant \dfrac{\mu^+-\mu^-}{\omega}\leqslant 1$,  $\dfrac{\theta(\rho)}{2M}\leqslant \dfrac{\theta(\rho)}{\omega}\leqslant 1$ and 
$$\frac{\theta(\rho)}{2M} \leqslant \tilde{u}^{\pm}(x)+\theta(\rho)/\omega \leqslant 2,\quad x\in B^{(d)}_r(y),$$
and since $p_1\leqslant p_2\leqslant ... \leqslant p_N$ it follows from the previous inequality that we obtain the required \eqref{eq3.7}, this proves the lemma.
\end{proof} 

\subsection{Alternatives}
We can always assume that $\mu^+-\dfrac{\omega}{4}\geqslant \mu^-+\dfrac{\omega}{4}$, since otherwise, inequality \eqref{eq3.3} holds with $\beta=\dfrac{1}{2}$. The  proof of Proposition \ref{pr3.1} is based on studying of two different cases, either
\begin{equation}\label{eq3.8}
|B_1\cap\{\tilde{u}^{-}\leqslant \frac{1}{4}\}|\leqslant \frac{1}{2}|B_1|,
\end{equation}
or
\begin{equation}\label{eq3.9}
|B_1\cap\{\tilde{u}^{+}\leqslant \frac{1}{4}\}|\leqslant |B_1\cap\{\tilde{u}^{-}\geqslant \frac{1}{4}\}|\leqslant \frac{1}{2}|B_1|.
\end{equation}
In the subsequent considerations, assume that inequality \eqref{eq3.2} is violated, i.e.
\begin{equation}\label{eq3.10}
\omega \geqslant C_*\,\theta(\rho).
\end{equation}
The proof under alternatives \eqref{eq3.8} or \eqref{eq3.9} follows a similar approach, so we consider only the first one. As mentioned in the section \ref{Introduction}, we use the Kilpel\"{a}inen-Mal\'y technique, which begins with defining the sequences of level lines $\{k_j\}_{j\in \mathbb{N}}$ and $\{\delta_j\}_{j\in \mathbb{N}}$.

\subsection{ Definition of the Sequences $\{k_j\}_{j\in \mathbb{N}}$ and $\{\delta_j\}_{j\in \mathbb{N}}$}\label{sec3.4}

 In this section, we construct the sequence of levels $k_j$, which plays a central role in the Kilpel\"{a}inen--Mal\'y iteration technique, along with the corresponding sequence of step sizes $\delta_j$ associated with $k_j$.

For $j \geqslant 0$ and  $k> k_j$, $k_j$ to be defined, set $r_j:=\dfrac{1}{2^{j+1}}$, $\delta_j(k):=k-k_j$, $B_{j, k}:=B^{(\delta_j(k))}_{r_j}(y)$ and let
\begin{equation}\label{eq3.11}
A_j(k):=\big[\theta(\rho)\big]^{-\gamma_1}\frac{[\delta_j(k)]^{p_N-N}}{r^{p_N}_j}\int\limits_{B_{j,k}}\Big(\frac{v^--k_j}{\delta_j(k)}\Big)^{(1+\lambda)(p_N-1)}_{+}\,\zeta^{m-p_N}_{j, k}\,dx,
\end{equation} 
where  $v^-$ was defined in Lemma \ref{lem3.1} and $\gamma_1$, $\lambda$, $m$ are some fixed positive numbers to be specified later
and $\zeta_{j, k} \in C^1_0\big(B_{j,k}\big)$, $0\leqslant \zeta_{j, k} \leqslant 1$, $\zeta_{j, k}=1$ in $B^{(\delta_j(k))}_{r_{j+1}}(y)$ and $\Big|\dfrac{\partial \zeta_{j,k}}{\partial x_i}\Big|^{p_i}\leqslant \gamma \dfrac{[\delta_j(k)]^{p_N-p_i}}{r^{p_N}_j}$.

\subsubsection{ Definition of $k_1$ and $\delta_0$}\label{3.4.1}
Fix a  number $\varkappa \in (0,1)$ to be specified later depending only on the data. Let $y\in B_{\frac{1}{2}}$ be arbitrary. We set $k_{-1}:=0$ and $k_0:=\max\big(\mathcal{I}, 1)$, where
$$
\mathcal{I}:=\bigg(\frac{\big[\theta(\rho)\big]^{-\gamma_1}}{\varkappa}\int\limits_{B_{\frac{1}{2}}(y)}\big[v^-\big]^{(1+\lambda)(p_N-1)}\,dx\bigg)^{\frac{1}{N-1+\lambda(p_N-1)}}.
$$
Correspondingly, we introduce $\delta_{-1}:=\delta_{-1}(k_0)=k_0-k_{-1}=\max\big(\mathcal{I}, 1\big)$.
If 
$$
A_0(k_0+\tfrac{1}{2}\,\delta_{-1})\leqslant \varkappa,
$$
then we set $k_1:=k_0+\frac{1}{2}\,\delta_{-1}$. Note that $A_0(k)$ is a continuous, decreasing function, and $A_0(k) \downarrow 0$ as $k\rightarrow \infty$. Thus, if
$$
A_0(k_0+\tfrac{1}{2}\,\delta_{-1}) >\varkappa, 
$$
there exists $\bar{k}> k_0+\frac{1}{2}\,\delta_{-1}$ such that $A_0(\bar{k})=\varkappa$. In this case, we set $k_1:=\bar{k}$, and in both scenarios, we define $\delta_0:=\delta_{0}(k_1)=k_1-k_0$.

\begin{claim}
    There exists $c_0 >1$ depending only on the data and $\lambda$ such that
\begin{equation}\label{eq3.12}
\delta_0\leqslant c_0\,\,\delta_{-1}.
\end{equation}
\end{claim}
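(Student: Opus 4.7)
The plan is to split on the dichotomy that defines $k_1$ in Section~\ref{3.4.1} and handle the trivial branch immediately, then reduce the nontrivial branch to the definition of $\mathcal{I}$. In the first branch, $A_0(k_0 + \tfrac{1}{2}\delta_{-1}) \leqslant \varkappa$, the construction directly gives $k_1 = k_0 + \tfrac{1}{2}\delta_{-1}$, so $\delta_0 = \tfrac{1}{2}\delta_{-1}$ and the claim holds with $c_0 = \tfrac{1}{2}$. All the work is in the opposite branch, where $k_1 = \bar k$ satisfies $A_0(\bar k) = \varkappa$ and $\bar k > k_0 + \tfrac{1}{2}\delta_{-1}$.

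In that branch I would first extract a geometric lower bound on $\delta_0$ that lets me compare the cube in $A_0$ with the one in the definition of $\mathcal{I}$. Since $\delta_{-1} = \max(\mathcal{I}, 1) \geqslant 1$, one has
$$\delta_0 = \bar k - k_0 > \tfrac{1}{2}\delta_{-1} \geqslant \tfrac{1}{2} = r_0.$$
Because $p_i \leqslant p_N$, the $i$-th side of $B^{(\delta_0)}_{r_0}(y)$ equals $r_0(r_0/\delta_0)^{(p_N-p_i)/p_i} \leqslant r_0$, so the ``anisotropic'' cube is in fact contained in the standard one:
$$B^{(\delta_0)}_{r_0}(y) \subset B_{r_0}(y) = B_{1/2}(y).$$
This inclusion is the geometric ingredient that makes the whole comparison possible.

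The last step is to read off the identity $A_0(k_1) = \varkappa$ from \eqref{eq3.11} and isolate a pure power of $\delta_0$. Pulling the $\delta_0^{-(1+\lambda)(p_N-1)}$ factor out of the integrand and combining the exponents as $p_N - N - (1+\lambda)(p_N - 1) = -(N-1+\lambda(p_N-1))$, one obtains
$$\varkappa\, r_0^{p_N}\, \delta_0^{N-1+\lambda(p_N-1)} = [\theta(\rho)]^{-\gamma_1}\int_{B^{(\delta_0)}_{r_0}(y)} (v^- - k_0)_+^{(1+\lambda)(p_N-1)}\,\zeta_{0,k_1}^{m-p_N}\,dx.$$
Using $(v^- - k_0)_+ \leqslant v^-$, $\zeta_{0,k_1} \leqslant 1$, and the cube inclusion above, the right-hand side is at most $[\theta(\rho)]^{-\gamma_1}\int_{B_{1/2}(y)}[v^-]^{(1+\lambda)(p_N-1)}\,dx$, which by the very definition of $\mathcal{I}$ equals $\varkappa\,\mathcal{I}^{N-1+\lambda(p_N-1)}$. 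Canceling $\varkappa$ and taking the corresponding root gives
$$\delta_0 \leqslant r_0^{-p_N/(N-1+\lambda(p_N-1))}\,\mathcal{I} \leqslant c_0\,\delta_{-1}, \qquad c_0 := 2^{p_N/(N-1+\lambda(p_N-1))}.$$

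There is no deep obstacle here; the proof is a one-shot computation. The only real subtlety worth flagging is the role of the truncation $k_0 = \max(\mathcal{I}, 1) \geqslant 1$: it is exactly what forces $\delta_0 \geqslant r_0$, which in turn is exactly what turns $A_0(k_1) = \varkappa$ into a comparison with the integral defining $\mathcal{I}$. The remaining piece, the exponent identity $p_N - N + (1+\lambda)(p_N-1) = N - 1 + \lambda(p_N - 1)$, is the same bookkeeping that appears in the one-step Kilpel\"ainen--Mal\'y mechanism and will recur in the inductive step for $\delta_j$.
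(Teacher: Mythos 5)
Your proof is correct and rests on the same core ingredients as the paper's: the dichotomy in the definition of $k_1$, the observation that $\delta_{-1}\geqslant 1$ forces $\delta_0>r_0$ and hence $B^{(\delta_0)}_{r_0}(y)\subset B_{\frac{1}{2}}(y)$, and identifying the resulting integral bound with (a power of) $\mathcal{I}$. The only presentational difference is that you solve the equation $A_0(k_1)=\varkappa$ directly for $\delta_0$, whereas the paper evaluates $A_0(k_0+c_0\delta_{-1})$ and invokes monotonicity of $A_0$ to conclude $k_1<k_0+c_0\delta_{-1}$; both give a valid $c_0>1$ (yours is $2^{p_N/(N-1+\lambda(p_N-1))}$, theirs $2^{(p_N+1)/(N-1+\lambda(p_N-1))}$), so this is a minor simplification rather than a different route.
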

\begin{proof}
Indeed, if $k_1=k_0+\frac{1}{2}\,\delta_{-1}$, then inequality \eqref{eq3.12} is evident. Now, if $k_1>k_0+\frac{1}{2}\,\delta_{-1}$, 
inequality $\delta_{-1} \geqslant 1$ implies that $B^{(c_0\delta_{-1})}_{r_0}(y)\subset B_{\frac{1}{2}}(y)$. By our choice of $\mathcal{I}$, we have
\begin{equation*}
\begin{aligned}
A_0(k_0+c_0\,\delta_{-1})&\leqslant 2^{p_N}\big[\theta(\rho)\big]^{-\gamma_1}(c_0\,\mathcal{I})^{1-N-\lambda(p_N-1)} \int\limits_{B_{\frac{1}{2}}(y)} \big[v^-\big]^{(1+\lambda)(p_N-1)}\,dx
\\&\leqslant \frac{2^{p_N}\,\varkappa}{c^{N-1+\lambda(p_N-1)}_0}=\frac{\varkappa}{2},
\end{aligned}
\end{equation*}
provided that $c_0:=2^{\frac{p_N+1}{N-1+\lambda(p_N-1)}}$. Since $A_0(k_1)=\varkappa$ and $A_0(k)$ is a decreasing function, it follows that $k_1 <k_0+c_0\,\delta_{-1}$, from which the required \eqref{eq3.12} follows.
\end{proof}

\subsubsection{Induction Argument}\label{sec3.4.2}

We construct the sequences of positive numbers $\{k_j\}_{j\in \mathbb{N}}$ and $\{\delta_j\}_{j\in \mathbb{N}}$ inductively as follows. Suppose that $k_1$, \dots, $k_j$ and $\delta_0$, \dots, $\delta_{j-1}$ have already been chosen such that
\begin{equation}\label{eq3.13}
A_i(k_{i+1})\leqslant \varkappa,\quad \text{and}\quad \delta_i\leqslant c_0\,\delta_{i-1},\quad i=0, ..., j-1.
\end{equation}
We now proceed to determine how to choose $k_{j+1}$ and $\delta_j$.
If 
$$
A_j(k_j+\frac{1}{2}\,\delta_{j-1})\leqslant \varkappa, 
$$
then we set $k_{j+1}= k_j+\frac{1}{2}\,\delta_{j-1}$. Note that $A_j(k) \downarrow 0$ as $k\rightarrow \infty$. Therefore, if 
$$
A_j(k_j+\frac{1}{2}\,\delta_{j-1}) > \varkappa, 
$$
there exists $\bar{k}> k_j+\frac{1}{2}\,\delta_{j-1}$ such that $A_j(\bar{k}) =\varkappa$. In this case, we set $k_{j+1}:=\bar{k}$. In both cases, we define $\delta_j:=\delta_j(k_{j+1})=k_{j+1}-k_j$. Note that our choices ensure that
\begin{equation}\label{eq3.14}
A_j(k_{j+1})\leqslant \varkappa.
\end{equation}
Moreover, since $B_{j, k_j+c_0 \delta_{j-1}}\subset B_{j-1, k_{j}}$ and $\big\{\zeta_{j, k_j+c_0\delta_{j-1}}\neq 0\big\}\subset \big\{\zeta_{j-1, k_j}=1\big\}$, by \eqref{eq3.13} and our choice of $c_0$, we have
\begin{equation*}
A_j(k_j+ c_0\,\delta_{j-1})\leqslant \frac{2^{p_N}}{c^{N-1+\lambda(p_N-1)}_0}\,A_{j-1}(k_j)\leqslant \frac{2^{p_N}\,\varkappa}{c^{N-1+\lambda(p_N-1)}_0}\,=\frac{\varkappa}{2},
\end{equation*}
which implies
\begin{equation}\label{eq3.15}
\delta_j\leqslant c_0\, \delta_{j-1}.
\end{equation}
In what follows, we write $B_j:=B_{j, k_{j+1}}=B^{(\delta_j(k_{j+1}))}_{r_j}(y)$ and $\zeta_j:=\zeta_{j, k_{j+1}}$.

\subsection{Kilpel\"{a}inen--Mal\'y Lemma}
The following lemma is the key to the Kilpel\"{a}inen--Mal\'y technique. 
\begin{lemma}\label{lem3.2}
Let the sequences $r_j$, $k_j$, and $\delta_j$ be defined in section~\ref{sec3.4}.
Additionally, let $0 < \lambda < p_1 - 1$. Then for any $j\geqslant 1$, there holds
\begin{equation}\label{eq3.16}
\delta_j\,\leqslant \frac{1}{2}\,\delta_{j-1} +\gamma r_j+ \gamma \Big(\big[\theta(\rho)\big]^{-\gamma_1}\,\int\limits_{B_{r_{j-1}}(y)} |\tilde{f}(x)|\,dx \Big)^{\frac{1}{N-1}},
\end{equation}
where $B_{r_{j-1}}(y)=\{x:|x_i-y_{i}|<r_{j-1}, \quad i=1, ...,  N\}$.
\end{lemma}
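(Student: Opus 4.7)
The plan is to distinguish the two possibilities arising in the construction of $k_{j+1}$ in Section~\ref{sec3.4.2}. If $k_{j+1}=k_j+\tfrac{1}{2}\delta_{j-1}$, then $\delta_j=\tfrac12\delta_{j-1}$ and \eqref{eq3.16} holds immediately. The substantive case is $A_j(k_{j+1})=\varkappa$, in which an upper bound on $\delta_j$ must be produced. Throughout this case I may further assume $\delta_j\geqslant r_{j-1}$ (otherwise $\delta_j\leqslant 2r_j$ already gives the conclusion) and $\delta_j^{N-1}\geqslant[\theta(\rho)]^{-\gamma_1}\int_{B_{r_{j-1}}(y)}|\tilde f|\,dx$; the former reduction in particular guarantees the geometric inclusion $B_{j-1}\subset B_{r_{j-1}}(y)$ needed to pass from the anisotropic cube appearing in the energy estimate to the isotropic cube in the data integral on the right-hand side of \eqref{eq3.16}.

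The argument then combines three ingredients. First, the local energy estimate of Lemma~\ref{lem3.1}, applied with $r=r_{j-1}$, $d=\delta_{j-1}$, $k=k_j$ and cutoff $\zeta_{j-1}$, bounds the weighted gradient energy of $v^-$ on $B_{j-1}\cap\{v^->k_j\}$ in terms of the integral underlying $A_{j-1}(k_j)$ (which is $\leqslant\varkappa$ by construction) and the $L^1$ datum $\int_{B_{r_{j-1}}(y)}|\tilde f|\,dx$. Second, with $w:=((v^--k_j)/\delta_j-\tfrac12)_+\,\zeta_{j-1}$, the anisotropic Sobolev inequality (Lemma~\ref{lem2.1}), applied on $B_{j-1}$ with exponent $q=(1+\lambda)(p_N-1)$, reads
\begin{equation*}
\|w\|_{L^q(B_{j-1})}^{q}\leqslant c\,|B_{j-1}|\,\prod_{i=1}^{N}\|w_{x_i}\|_{L^{p_i}(B_{j-1})}^{q/N}.
\end{equation*}
Inserting the gradient bound from the energy estimate and exploiting $\sum 1/p_i=1$ to collapse the anisotropic product, the right-hand side simplifies to an expression whose net power of $\delta_j$ is $N-1$. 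Third, the identity $A_j(k_{j+1})=\varkappa$ is used as a lower bound on the same $\|w\|_{L^q}^q$, via the pointwise inequality $((v^--k_j)/\delta_j-\tfrac12)_+\geqslant \tfrac14(v^--k_j)_+/\delta_j$ valid on $\{v^->k_{j+1}+\tfrac12\delta_j\}$, together with a standard splitting between the level sets $\{v^->k_{j+1}\}$ and $\{k_j<v^-\leqslant k_{j+1}\}$.

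Chaining these three inputs gives, for some $\mu>0$ depending only on $\lambda$ and the data, an inequality of the shape
\begin{equation*}
\delta_j^{N-1}\leqslant C\,\varkappa^{\mu}\,\delta_{j-1}^{N-1}+C\,r_{j-1}^{N-1}+C\,[\theta(\rho)]^{-\gamma_1}\int_{B_{r_{j-1}}(y)}|\tilde f|\,dx.
\end{equation*}
Fixing $\varkappa$ small enough that $C\varkappa^{\mu}\leqslant 2^{-(N-1)}$ absorbs the first term as $(\tfrac12\delta_{j-1})^{N-1}$, and \eqref{eq3.16} follows by taking $(N-1)$-th roots. The main technical obstacle I expect is the bookkeeping of the $[\theta(\rho)]$-weights: the $[\theta(\rho)]^{-2(p_N-1)}$ factors produced by Lemma~\ref{lem3.1} must be absorbed into a single $[\theta(\rho)]^{-\gamma_1}$ by taking $\gamma_1$ sufficiently large depending only on $N$, $p_1,\dots,p_N$, $\lambda$, and $M$; and one must verify that the anisotropic containment $B_{j-1}\subset B_{r_{j-1}}(y)$ together with the compatibility between the cutoffs $\zeta_{j-1}$ and $\zeta_j$ on the respective anisotropic cubes hold uniformly in $j$ under the working assumption $\delta_j\leqslant c_0\delta_{j-1}$ of \eqref{eq3.15}.
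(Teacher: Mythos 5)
Your outline correctly identifies the two dichotomies ($k_{j+1}=k_j+\tfrac12\delta_{j-1}$ versus $A_j(k_{j+1})=\varkappa$, and the reduction to $\delta_j\gtrsim r_j$), and the high-level scheme (energy estimate $\to$ anisotropic Sobolev $\to$ lower bound from $A_j(k_{j+1})=\varkappa$, then absorb by fixing $\varkappa$ small) matches the paper. However, the central analytic step is wrong as stated, and the gap is exactly where the Kilpel\"ainen--Mal\'y technique differs from the classical De Giorgi one.

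You propose to apply the Sobolev inequality to the \emph{linear} truncation $w=\big((v^--k_j)/\delta_j-\tfrac12\big)_+\,\zeta_{j-1}$ and ``insert the gradient bound from the energy estimate.'' But Lemma~\ref{lem3.1} does not control $\int|v^-_{x_i}|^{p_i}\zeta^m$; it only controls the \emph{weighted} quantity
$\int |v^-_{x_i}|^{p_i}\big(1+(v^--k_j)/\delta_j\big)^{-(1+\lambda)}\zeta^m$.
On the set $\{w>0\}$ the weight $\big(1+(v^--k_j)/\delta_j\big)^{1+\lambda}$ is unbounded (this is precisely what you are trying to estimate), so the passage from the weighted Caccioppoli estimate to $\|w_{x_i}\|_{L^{p_i}}$ is not available. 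The paper's resolution is the nonlinear change of variable $W(\sigma)=\int_0^\sigma(1+s)^{-(1+\lambda)/p_1}\,ds$: one then has
$\delta_j^{p_i}\big|\partial_{x_i}W\big((v^--k_j)/\delta_j\big)\big|^{p_i}=\big(1+(v^--k_j)/\delta_j\big)^{-(1+\lambda)p_i/p_1}|v^-_{x_i}|^{p_i}\le \big(1+(v^--k_j)/\delta_j\big)^{-(1+\lambda)}|v^-_{x_i}|^{p_i}$
for every $i$ because $p_i\ge p_1$, and this is exactly the quantity the energy estimate controls. Without such a damping in the test function, the Sobolev step has no input.

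Two further points. First, because the paper replaces the linear height by $W$, the correct Sobolev exponent is $q=\dfrac{p_1(p_N-1)(1+\lambda)}{p_1-1-\lambda}$, chosen together with $\lambda$ so that $q>N$ and $W(\sigma)^q\sim\sigma^{(1+\lambda)(p_N-1)}$ for $\sigma$ bounded away from $0$; your $q=(1+\lambda)(p_N-1)$ does not match this structure and is not guaranteed to exceed $N$. Second, the fixed cutoff at $\tfrac12$ in your $w$ does not give a tunable smallness; the paper splits the integral defining $A_j(k_{j+1})$ over the sets $\{(v^--k_j)/\delta_j\le\epsilon\}$ and its complement with a \emph{free} parameter $\epsilon$, bounds the first piece by $\gamma\,\epsilon^{(1+\lambda)(p_N-1)}\varkappa$ using $A_{j-1}(k_j)\le\varkappa$, and then chooses $\epsilon$ (and afterwards $\varkappa$) small; a hard-coded threshold of $\tfrac12$ cannot be made to absorb.
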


\begin{proof}
Fix $j\geqslant 1$ and assume that
\begin{equation}\label{eq3.17}
\delta_j > \frac{1}{2}\delta_{j-1}+ r_j,
\end{equation}
since otherwise, the estimate \eqref{eq3.16} is evident. Inequality \eqref{eq3.17}  ensures that $k_{j+1} > k_j + \tfrac{1}{2} \delta_{j-1}$, which, according to the induction arguments described in subsection~\ref{sec3.4.2}, implies that $A_j(k_{j+1}) = \varkappa$. Moreover,
$$\frac{r^{\frac{p_N}{p_i}}_j}{\delta^{\frac{p_N-p_i}{p_i}}_j}\leqslant \frac{1}{2} \frac{r^{\frac{p_N}{p_i}}_{j-1}}{\delta^{\frac{p_N-p_i}{p_i}}_{j-1}},\quad \text{and}\quad\frac{r^{\frac{p_N}{p_i}}_j}{\delta^{\frac{p_N-p_i}{p_i}}_j}\leqslant r_j,\quad i=1,..., N, $$
and hence 
\begin{equation}\label{eq3.18}
B_{j}\subset B_{j-1},\,\,\, B_{j}\subset B_{r_j}(y),\,\,\, \{x\in B_j:\,\, \zeta_j \neq 0\}\subset B_{j-1},\,\,\, \text{and}\,\,\, \{x\in B_j:\,\,\, \zeta_j \neq 0\} \subset B_{r_{j-1}}(y).
\end{equation}
Let us estimate the term on the right-hand side of \eqref{eq3.11} for $k=k_{j+1}$. To do this, we apply the decomposition $B_j\cap\{v^-> k_j\}= L_{\epsilon, j} \cup E_{\epsilon, j}$, where 
\begin{equation*}
L_{\epsilon, j}:=\big\{x\in B_j\cap\{v^-> k_j\} : \dfrac{v^--k_j}{\delta_j}\leqslant \epsilon\big\},\quad E_{\epsilon, j}:=B_j\cap\{v^-> k_j\} \setminus L_{\epsilon, j}.
\end{equation*}
Here, the parameter $\epsilon$ depends on the given data and is chosen to be sufficiently small, its precise value will be specified later.
 Thus, we write the decomposition for $A_j(k_{j+1})$ in the following form:
\begin{equation}\label{eq3.18*}
\begin{aligned}
\varkappa=A_j(k_{j+1})=&\big[\theta(\rho)\big]^{-\gamma_1}\frac{[\delta_j]^{p_N-N}}{r^{p_N}_j}\int\limits_{B_j\cap\{v^-> k_j\}}\Big(\frac{v^--k_j}{\delta_j}\Big)^{(1+\lambda)(p_N-1)}\,\zeta^{m-p_N}_{j, k}\,dx
\\=&\big[\theta(\rho)\big]^{-\gamma_1}\frac{[\delta_j]^{p_N-N}}{r^{p_N}_j}\int\limits_{L_{\epsilon, j}}\Big(\frac{v^--k_j}{\delta_j}\Big)^{(1+\lambda)(p_N-1)}\,\zeta^{m-p_N}_{j, k}\,dx
\\&+\big[\theta(\rho)\big]^{-\gamma_1}\frac{[\delta_j]^{p_N-N}}{r^{p_N}_j}\int\limits_{E_{\epsilon, j}}\Big(\frac{v^--k_j}{\delta_j}\Big)^{(1+\lambda)(p_N-1)}\,\zeta^{m-p_N}_{j, k}\,dx
\\=:& \,A_j^{(L_{\epsilon, j})}+A_j^{(E_{\epsilon, j})}
\end{aligned}
\end{equation}

 Now, we choose the constant $m$ from \eqref{eq3.11} such that $m\geqslant p_N$. Furthermore, note that for any $x\in B_j\cap\{v^->k_j\}$, the following inequality holds:
$$
1=\frac{k_j-k_{j-1}}{k_j-k_{j-1}}\leqslant \frac{v^--k_{j-1}}{\delta_{j-1}}.
$$
By applying inequalities \eqref{eq3.14} and \eqref{eq3.15}, we can now evaluate $A_j^{(L_{\epsilon, j})}$
\begin{equation}\label{eq3.19}
\begin{aligned}
A_j^{(L_{\epsilon, j})}&\leqslant\gamma\, \epsilon^{(1+\lambda)(p_N-1)}\,\big[\theta(\rho)\big]^{-\gamma_1} \frac{[\delta_{j-1}]^{p_N-N}}{r^{p_N}_{j-1}}\int\limits_{L_{\epsilon, j}}\,\zeta^{m-p_N}_{j-1}\,dx 
\\&\leqslant \gamma \epsilon^{(1+\lambda)(p_N-1)} \big[\theta(\rho)\big]^{-\gamma_1}\frac{[\delta_{j-1}]^{p_N-N}}{r^{p_N}_{j-1}}\int\limits_{B_{j-1}}\Big(\frac{v^--k_{j-1}}{\delta_{j-1}}\Big)^{(1+\lambda)(p_N-1)}_{+}\,\zeta^{m-p_N}_{j-1}\,dx
\\&=\gamma \epsilon^{(1+\lambda)(p_N-1)} A_{j-1}(k_j)\leqslant \gamma \epsilon^{(1+\lambda)(p_N-1)}\,\varkappa,
\end{aligned}
\end{equation}
 where the generic constant $\gamma$ depends on $c_0$ and $p_N$.
To derive an estimate for  $A_j^{(E_{\epsilon, j})}$, we set the constant $\lambda$ from \eqref{eq3.11} such that $\lambda <p_1-1$ and introduce the following auxiliary function
$$
W (\sigma):=\int\limits^{\sigma}_0\,(1+s)^{-\frac{1+\lambda}{p_1}}\,ds,\quad \sigma >0.
$$
Evidently, we have
\begin{equation}\label{eq3.20}
W(\sigma)\leqslant \gamma(\lambda, p_1)\,(1+\sigma)^{\frac{p_1-1-\lambda}{p_1}},\qquad\text{and}\qquad
 W(\sigma)\geqslant \gamma(\epsilon, \lambda, p_1)\,\sigma^{\frac{p_1-1-\lambda}{p_1}},\text{ if }\sigma \geqslant \epsilon.
\end{equation}

Now, we apply Lemma~\ref{lem2.1} with $q=\dfrac{p_1(p_N-1)(1+\lambda)}{p_1-1-\lambda}$ and choose the constant $\lambda$ such that $q>N$, which implies that $\frac{Np_1}{p_1(p_N-1)+N}-1<\lambda<p_1-1$. Furthermore, by applying Young's inequality, we obtain
\begin{equation}\label{eq3.21}
\begin{aligned}
A_j^{(E_{\epsilon, j})}\leqslant& \gamma(\epsilon)\big[\theta(\rho)\big]^{-\gamma_1}\frac{[\delta_j]^{p_N-N}}{r^{p_N}_j}\int\limits_{E_{\epsilon, j}}\Big[W\Big(\frac{v^--k_j}{\delta_j}\Big)\,\zeta^{\frac{m-p_N}{q}}_j\Big]^{q}\,dx
\\\leqslant&\gamma(\epsilon)\big[\theta(\rho)\big]^{-\gamma_1}\frac{[\delta_j]^{p_N-N}}{r^{p_N}_j}\int\limits_{B_{j}\cap\{v^->k_j\}} \Big[W\Big(\frac{v^--k_j}{\delta_j}\Big)\,\zeta^{\frac{m-p_N}{q}}_j\Big]^{q}\,dx
\\\leqslant&\gamma(\epsilon)\big[\theta(\rho)\big]^{-\gamma_1} \prod\limits_{i=1}^N \Bigg(\int\limits_{B_{j}\cap\{v^->k_j\}}\biggr|\frac{\partial}{\partial x_i}\Big[W\Big(\frac{v^--k_j}{\delta_j}\Big)\,\zeta^{\frac{m-p_N}{q}}_j\Big]\biggr|^{p_i} dx\Bigg)^{\frac{q}{N p_i}}
\\=&\frac{\gamma(\epsilon)}{\delta^{q}_j}\big[\theta(\rho)\big]^{-\gamma_1} \prod\limits_{i=1}^N \Bigg(\delta^{p_i}_j\,\int\limits_{B_{j}\cap\{v^->k_j\}}\biggr|\frac{\partial}{\partial x_i}\Big[W\Big(\frac{v^--k_j}{\delta_j}\Big)\,\zeta^{\frac{m-p_N}{q}}_j\Big]\biggr|^{p_i} dx\Bigg)^{\frac{q}{N p_i}}
\\\leqslant&\frac{\gamma(\epsilon)}{\delta^{q}_j}\big[\theta(\rho)\big]^{-\gamma_1}  \Bigg(\,\sum\limits_{i=1}^N\,\delta^{p_i}_j\,\int\limits_{B_{j}\cap\{v^->k_j\}}\biggr|\frac{\partial}{\partial x_i}W\Big(\frac{v^--k_j}{\delta_j}\Big)\biggr|^{p_i} \zeta^{\frac{(m-p_N)p_1}{q}}_j dx
\\&+\sum\limits_{i=1}^N\,\delta^{p_i}_j\,\int\limits_{B_{j}\cap\{v^->k_j\}} \biggr[W\Big(\frac{v^--k_j}{\delta_j}\Big)\biggr]^{p_i}\biggr|\frac{\partial \zeta_j}{\partial x_i}\biggr|^{p_i}\,\zeta^{\frac{(m-p_N)p_i}{q}-p_i}_j\,dx\Bigg)^{\frac{q}{N}}.
\end{aligned}
\end{equation}
Let us estimate the integrals on the right-hand side of \eqref{eq3.21}. We apply Lemma \ref{lem3.1} to the first integral, which then leads to the following bound
\begin{equation*}
\begin{aligned}
\delta^{-N}_j&\big[\theta(\rho)\big]^{-\frac{N}{q}\gamma_1}  \sum\limits_{i=1}^N\,\delta^{p_i}_j\,\int\limits_{B_{j}\cap\{v^->k_j\}}\Big|\frac{\partial}{\partial x_i}W\Big(\frac{v^--k_j}{\delta_j}\Big)\Big|^{p_i} \zeta^{\frac{(m-p_N)p_1}{q}}_j dx
\\\leqslant&\,\delta^{-N}_j\big[\theta(\rho)\big]^{-\frac{N}{q}\gamma_1}  \,\sum\limits_{i=1}^N\,\,\int\limits_{B_{j}\cap\{v^->k_j\}}\frac{|v^-_{x_i}|^{p_i}}{\big(1+\frac{v^--k_j}{\delta_j}\big)^{1+\lambda}}\zeta^{\frac{(m-p_N)p_1}{q}}_j dx
\\\leqslant&\,\gamma\,\frac{\delta^{p_N-N}_j}{r^{p_N}_j}\big[\theta(\rho)\big]^{-\frac{N}{q}\gamma_1 -2(p_N-1)}\int\limits_{B_{j}\cap\{v^->k_j\}}\Big(1+\frac{v^--k_j}{\delta_j}\Big)^{(1+\lambda)(p_N-1)}\zeta^{\frac{(m-p_N)p_1}{q}-p_N}_j dx
\\&+ \gamma\,\delta^{1-N}_j\big[\theta(\rho)\big]^{-\frac{N}{q}\gamma_1 -2(p_N-1)}\int\limits_{B_{j}}|\tilde{f}|\,\zeta^{\frac{(m-p_N)p_1}{q}}_j\,dx.
\end{aligned}
\end{equation*}
Choose $m$ from the condition $\dfrac{(m-p_N)p_1}{q}-p_N=1$ and fix $\gamma_1>0$ by the condition 
$$\dfrac{N}{q}\gamma_1+2(p_N-1)=\gamma_1,\quad \text{i.e.}\quad \gamma_1=\dfrac{2(p_N-1)q}{q-N}.$$ 
Using \eqref{eq3.14}, \eqref{eq3.15}, \eqref{eq3.17} and \eqref{eq3.18}, we obtain
\begin{equation}\label{eq3.220}
\begin{aligned}
&\big[\theta(\rho)\big]^{-\gamma_1}\,\frac{\delta^{p_N-N}_j}{r^{p_N}_j}\int\limits_{B_{j}\cap\{v^->k_j\}}\Big(1+\frac{v^--k_j}{\delta_j}\Big)^{(1+\lambda)(p_N-1)}\zeta_j\,dx
\\&\leqslant\,\gamma \big[\theta(\rho)\big]^{-\gamma_1}\,\frac{\delta^{p_N-N}_{j-1}}{r^{p_N}_{j-1}}\int\limits_{B_{j}\cap\{v^->k_j\}}\Big(\frac{v^--k_{j-1}}{\delta_{j-1}}\Big)^{(1+\lambda)(p_N-1)}\zeta_{j-1}^{m-p_N}\,dx
\\&\leqslant \gamma A_{j-1}(k_{j})\leqslant\gamma\,\varkappa.
\end{aligned}
\end{equation}

Therefore, we derive the following bound for the first integral on the right-hand side of  \eqref{eq3.21}
\begin{equation}\label{eq3.22}
\begin{aligned}
    &\delta^{-N}_j\big[\theta(\rho)\big]^{-\frac{N}{q}\gamma_1}  \sum\limits_{i=1}^N\,\delta^{p_i}_j\,\int\limits_{B_{j}\cap\{v^->k_j\}}\Big|\frac{\partial}{\partial x_i}W\Big(\frac{v^--k_j}{\delta_j}\Big)\Big|^{p_i} \zeta^{\frac{(m-p_N)p_1}{q}}_j dx
    \\&\leqslant \gamma\,\varkappa+\gamma\,\delta^{1-N}_j\big[\theta(\rho)\big]^{-\gamma_1}\int\limits_{B_{j}}|\tilde{f}|\,dx.
\end{aligned}
\end{equation}
The second term on the right-hand side of \eqref{eq3.21} is estimated using   \eqref{eq3.220}: 
\begin{equation}\label{eq3.23}
\begin{aligned}
 &\delta^{-N}_j\big[\theta(\rho)\big]^{-\frac{N}{q}\gamma_1}\sum\limits_{i=1}^N\,\delta^{p_i}_j\,\int\limits_{B_{j}\cap\{v^->k_j\}} \Big[W\Big(\frac{v^--k_j}{\delta_j}\Big)\Big]^{p_i}\Big|\frac{\partial \zeta_j}{\partial x_i}\Big|^{p_i}\,\zeta^{\frac{(m-p_N)p_i}{q}-p_i}_j\,dx
 \\&\leqslant \gamma\big[\theta(\rho)\big]^{-\gamma_1}\,\frac{\delta^{p_N-N}_j}{r^{p_N}_j}\int\limits_{B_{j}\cap\{v^->k_j\}}\Big(1+\frac{v^--k_j}{\delta_j}\Big)^{(1+\lambda)(p_N-1)}\zeta_j\,dx\leqslant \gamma\,\varkappa.
\end{aligned}
\end{equation}

Collecting estimates \eqref{eq3.18*}, \eqref{eq3.19}, \eqref{eq3.21}, \eqref{eq3.22}, \eqref{eq3.23}, we obtain
\begin{equation*}
\varkappa\leqslant \epsilon^{(1+\lambda)(p_N-1)}\varkappa +\gamma(\epsilon)\Bigg(\varkappa +\delta^{1-N}_j\,\big[\theta(x_0,\rho)\big]^{-\gamma_1}\,\int\limits_{B_{r_{j-1}}(y)}|\tilde{f}|\,dx\Bigg)^{\frac{q}{N}}.
\end{equation*}
Choosing $\epsilon$ and $\varkappa$ such that
$\epsilon^{(1+\lambda)(p_N-1)}=\frac{1}{4}$, and $\gamma(\epsilon)\varkappa^{\frac{q}{N}-1}=\frac{1}{4}$,
the last inequality yields
$$\delta_j\leqslant \gamma(\epsilon, \varkappa)\Big(\big[\theta(\rho)\big]^{-\gamma_1}\int\limits_{B_{r_{j-1}}(y)} |\tilde{f}|\,dx\Big)^{\frac{1}{N-1}},$$
which proves \eqref{eq3.16}. This completes the proof of the lemma.
\end{proof}

\subsection{Upper Bound of $v^-$}

Fix $J>1$ and sum inequality \eqref{eq3.16} over $j$ from $1$ to $J-1$. Using \eqref{eq3.12} and definition of $r_j$, we derive:
\begin{equation*}\label{eq3.24}
\begin{aligned}
k_J&\leqslant \gamma(c_0) \,\delta_0+ \gamma \sum\limits_{j=1}^{J-1}\,r_j+\gamma\sum\limits_{j=1}^{J-1}\Big(\big[\theta(\rho)\big]^{-\gamma_1}\int\limits_{B_{r_{j-1}}(y)}|\tilde{f}|\,dx\Big)^{\frac{1}{N-1}}
\\&\leqslant \gamma +\gamma \mathcal{I} +\gamma\sum\limits_{j=1}^{J-1}\Big(\big[\theta(\rho)\big]^{-\gamma_1}\int\limits_{B_{r_{j-1}}(y)}|\tilde{f}|\,dx\Big)^{\frac{1}{N-1}}.
\end{aligned}
\end{equation*}
Returning to the original variables in the last term and estimating the integral sum by the corresponding integral, using \eqref{eq3.10}, we obtain:
\begin{equation*}
\sum\limits_{j=1}^{J-1}\Big(\int\limits_{B_{r_{j-1}}(y)}|\tilde{f}|\,dx\Big)^{\frac{1}{N-1}}=\sum\limits_{j=1}^{J-1}\Big(\omega^{-N}\int\limits_{B_{\frac{\rho}{2^{j-1}}}(y)}|f|\,dx\Big)^{\frac{1}{N-1}}\leqslant \gamma \big[\theta(\rho)\big]^{-\frac{N}{N-1}} \mathcal{W}^{|f|}_{1, N}(2\rho).
\end{equation*}
Combining the last two inequalities, we get
\begin{equation}\label{eq3.24}
k_J\leqslant \gamma +\gamma \bigg(\big[\theta(\rho)\big]^{-\gamma_1}\int\limits_{B_{1}}\big[v^-\big]^{(1+\lambda)(p_N-1)}\,dx\bigg)^{\frac{1}{N-1+\lambda(p_N-1)}}+\gamma \big[\theta(\rho)\big]^{-\frac{\gamma_1+N}{N-1}} \mathcal{W}^{|f|}_{1,N}(2 \rho).
\end{equation}
Since the sequence $\{k_j\}_{j\in \mathbb{N}}$ is increasing by construction and bounded from above, it converges. Let $k_{\infty}:=\lim\limits_{j\rightarrow \infty} k_j$ denote its limit.
Moreover, as $j \rightarrow \infty$, we have  $\delta_j \rightarrow 0$,  allowing us to pass to the limit $J\rightarrow \infty$ in \eqref{eq3.24}. From \eqref{eq3.14} we conclude that
\begin{equation*}
 \frac{1}{r^{p_N}_j}\int\limits_{B_j}(v^{-}-k_{j})^{(1+\lambda)(p_N-1)}_{+}\,dx\leqslant \gamma \delta^{N-1+\lambda(p_N-1)}_j\rightarrow 0,\quad j\rightarrow \infty.
 \end{equation*}
Choosing $y$ as a Lebesque point of the function $v^-$ we conclude that $v^-(y) \leqslant k_\infty$ and hence $v^-(y)$ is estimated by the right-hand side of \eqref{eq3.24}. Applicability of the Lebesgue
differentiation theorem follows from \cite[Chap. II, Sec. 3]{Guz}.
  Since $y$ is an arbitrary point in $B_{\frac{1}{2}}$ (see \ref{3.4.1}), we obtain
\begin{equation}\label{eq3.25}
\begin{aligned}
\sup\limits_{B_{\frac{1}{2}}} v^-\leqslant& \,\gamma +\gamma \bigg(\big[\theta(\rho)\big]^{-\gamma_1}\int\limits_{B_{1}}\big[v^-\big]^{(1+\lambda)(p_N-1)}\,dx\bigg)^{\frac{1}{N-1+\lambda(p_N-1)}}
\\&+\gamma \big[\theta(\rho)\big]^{-\frac{\gamma_1+N}{N-1}} \mathcal{W}^{|f|}_{1,N}(2 \rho).
\end{aligned}
\end{equation}

\subsection{Uniform Bound for the Integral of $v^-$}

Now, we estimate the integral on the right-hand side of \eqref{eq3.25}. According to the first alternative \eqref{eq3.8}, we have
$$|B_1\cap\{v^-=0\}|=\biggr|B_1\cap\Big\{\frac{1}{\tilde{u}^{-}+\theta(\rho)/\omega}\leqslant 4\Big\}\biggr|\geqslant \biggr|B_1\cap\Big\{\tilde{u}^{-}\geqslant \frac{1}{4}\Big\}\biggr|\geqslant \frac{|B_1|}{2}.$$
Since $p_N>N\geqslant2$, we have $(1+\lambda)(p_N-1)>1$. Taking into account the definition of the function $v^-$ and the fact that  $\omega<2M$, we apply Lemma \ref{lem2.2} and obtain
\begin{equation*}
\int\limits_{B_1} \big[v^-\big]^{(1+\lambda)(p_N-1)}\,dx\leqslant
\gamma \big[\theta(\rho)\big]^{-(1+\lambda)(p_N-1)+1}\,\int\limits_{B_1} v^-\,dx\leqslant \gamma
\big[\theta(\rho)\big]^{-(1+\lambda)(p_N-1)+1}\,\sum\limits_{i=1}^N\,\int\limits_{B_1} |v^-_{x_i}|\,dx .
\end{equation*}
Returning to the original coordinates, using  the H\"{o}lder inequality and \eqref{eq3.10}, we obtain
\begin{equation*}
\begin{aligned}
\sum\limits_{i=1}^N\,\int\limits_{B_1} |v^-_{x_i}|\,dx&\leqslant \gamma\,\big[\theta(\rho)\big]^{-2}\sum\limits_{i=1}^N \rho^{-p_N(1-\frac{1}{p_i})}\,\omega^{1-\frac{p_N}{p_i}+p_N-N}\int\limits_{B^{(\omega)}_\rho(x_0)}|u_{x_i}|\,dx
\\&\leqslant \gamma\,\big[\theta(\rho)\big]^{-2}\sum\limits_{i=1}^N \,\omega^{1-\frac{N}{p_i}}\Big(\int\limits_{B^{(\omega)}_\rho(x_0)}|u_{x_i}|^{p_i}\,dx\Big)^{\frac{1}{p_i}}
\\&\leqslant \gamma \big[\theta(\rho)\big]^{-1-\frac{N}{p_1}}\,\sum\limits_{i=1}^N\,\Big(\int\limits_{B^{(\omega)}_\rho(x_0)}|u_{x_i}|^{p_i}\,dx\Big)^{\frac{1}{p_i}}.
\end{aligned}
\end{equation*}
Using the inequality $\theta(\rho) \geqslant \rho$ and \eqref{eq3.10}, we obtain $\omega > \rho$, which implies the embedding $B^{(\omega)}_\rho(x_0)\subset B_\rho(x_0)$.
Combining the last two inequalities, we obtain
\begin{equation}\label{eq3.26}
\begin{aligned}
\int\limits_{B_1} \big[v^-\big]^{(1+\lambda)(p_N-1)}\,dx&\leqslant
\gamma \big[\theta(\rho)\big]^{-(1+\lambda)(p_N-1)-\frac{N}{p_1}}\,\sum\limits_{i=1}^N\,\biggr(\int\limits_{B^{(\omega)}_\rho(x_0)}|u_{x_i}|^{p_i}\,dx\biggr)^{\frac{1}{p_i}}
\\&\leqslant \gamma \big[\theta(\rho)\big]^{-(1+\lambda)(p_N-1)-\frac{N}{p_1}}\,\sum\limits_{i=1}^N\,\biggr(\int\limits_{B_\rho(x_0)}|u_{x_i}|^{p_i}\,dx\biggr)^{\frac{1}{p_i}}.
\end{aligned}
\end{equation}

\subsection{Proof of Proposition \ref{pr3.1}}

Inequalities \eqref{eq3.25} and \eqref{eq3.26} yield
\begin{equation}\label{eq3.27}
\begin{aligned}
\sup\limits_{B_{\frac{1}{2}}} v^-\leqslant \gamma &+\gamma \Bigg(\big[\theta(\rho)\big]^{-\gamma_1-(1+\lambda)(p_N-1)-\frac{N}{p_1}}\sum\limits_{i=1}^N\,\Big(\int\limits_{B_\rho(x_0)}|u_{x_i}|^{p_i}\,dx\Big)^{\frac{1}{p_i}}\Bigg)^{\frac{1}{N-1+\lambda(p_N-1)}}
\\&+\gamma \big[\theta(\rho)\big]^{-\frac{\gamma_1+N}{N-1}} \mathcal{W}^{|f|}_{1,N}(2 \rho).
\end{aligned}
\end{equation}

Fixing $\gamma_0$ such that 
$$\gamma_0=\max\left\{\gamma_1+(1+\lambda)(p_N-1)+\frac{N}{p_1}, \frac{\gamma_1+N}{N-1}\right\}$$
 and using our choice \eqref{eq3.1} for $\theta(\rho)$, we obtain
\begin{equation}\label{eq3.28}
\sup\limits_{B_{\frac{1}{2}}} v^-\leqslant\gamma +\gamma \Bigg(\big[\theta(\rho)\big]^{-\gamma_0}\sum\limits_{i=1}^N\,\Big(\int\limits_{B_\rho(x_0)}|u_{x_i}|^{p_i}\,dx\Big)^{\frac{1}{p_i}}\Bigg)^{\frac{1}{N-1+\lambda(p_N-1)}}
+\gamma \big[\theta(\rho)\big]^{-\gamma_0} \mathcal{W}^{|f|}_{1,N}(2 \rho)
\leqslant C,
\end{equation}
with a positive constant $C$ depending only on the data. Inequalities  \eqref{eq3.10} and  \eqref{eq3.28} imply
\begin{equation*}
\inf\limits_{B^{(\omega)}_{\frac{\rho}{2}}(x_0)} u\geqslant \mu^- +\frac{\omega}{C+4} -\theta(\rho)\geqslant \mu^-+\omega \biggr(\frac{1}{C+4}-\frac{1}{C_*}\biggr)=\mu^- +\frac{\omega}{2(C+4)},
\end{equation*}
provided that $C_*$ is chosen to satisfy $C_*= 2(C+4)$. This inequality yields
\begin{equation*}
\osc\limits_{B^{(\omega)}_{\frac{\rho}{2}}(x_0)} u\leqslant \biggr(1-\frac{1}{2(C+4)}\biggr)\,\omega :=\beta\,\omega,
\end{equation*}
which completes the proof of Proposition \ref{pr3.1}.

\section{Continuity of Solutions, Proof of Theorem \ref{th1.1}}
The proof of the continuity is almost standard (see \cite[Section~3]{DiB}) and uses Proposition \ref{pr3.1}. Fix $x_0\in \Omega$, let $R>0$ and construct the cube $B_{8R}(x_0)\subset \Omega$. If for all $\rho\in (0, R]$ there holds
\begin{equation}\label{eq4.1}
\osc\limits_{B_\rho(x_0)}\,u\leqslant \rho,
\end{equation}
then the continuity is evident, since $\osc\limits_{B_\rho(x_0)}\,u\to0$ as $\rho\to0$. 

Therefore, we assume that there exists $\rho_0\in (0, R]$ such that
\begin{equation}\label{eq4.2}
\osc\limits_{B_{\rho_0}(x_0)}\,u\geqslant \rho_0.
\end{equation}
In this case, we set
$$\mu^+_0:=\sup\limits_{B_{\rho_0}(x_0)} u,\quad \mu^-_0:=\inf\limits_{B_{\rho_0}(x_0)} u,\quad \omega_0:=\mu^+_0-\mu^-_0=\osc\limits_{B_{\rho_0}(x_0)} u.$$
By \eqref{eq4.2}, we conclude
$$B^{(\omega_0)}_{\rho_0}(x_0) \subset B_{\rho_0}(x_0),\quad \text{and}\quad \osc\limits_{B^{(\omega_0)}_{\rho_0}(x_0)} u\leqslant \omega_0.$$
Proposition \ref{pr3.1} yields
\begin{equation*}
\osc\limits_{B^{(\omega_0)}_{\frac{\rho_0}{2}}(x_0)} u\leqslant \max\big(\beta\,\omega_0, C_*\,\theta(\rho_0)\big)=:\omega_1,
\end{equation*}
with some $\beta \in (0,1)$ and $C_*>1$ depending only on the data. Choosing $\eta \in (0,1)$ such that
$$\frac{\eta^{\frac{p_N}{p_i}}}{\beta^{\frac{p_N-p_i}{p_i}}}\leqslant 2^{-\frac{p_N}{p_i}},\quad i=1, ..., N,\quad\text{i.e.}\quad
\eta=\frac{1}{2}\,\beta^{\frac{p_N-p_1}{p_N}},$$
we obtain
\begin{equation*}
\begin{cases}
B^{(\omega_1)}_{\rho_1}(x_0) \subset B^{(\omega_0)}_{\rho_0}(x_0),\quad \rho_1:=\eta \,\rho_0,\\
\text{and}\\
\osc\limits_{B^{(\omega_1)}_{\rho_1}(x_0)} u\leqslant \omega_1.
\end{cases}
\end{equation*}
Repeating this procedure we define for any $n\geqslant 0$
\begin{equation}\label{eq4.3}
\begin{cases}
\rho_{n+1}:=\eta^{n+1}\,\rho_0,\quad \omega_{n+1}:=\max\big(\beta\,\omega_{n}, C_*\,\theta(\rho_{n})\big),\\
\text{and}\\
\osc\limits_{B^{(\omega_{n+1})}_{\rho_{n+1}}(x_0)} u\leqslant \omega_{n+1},\quad B^{(\omega_{n+1})}_{\rho_{n+1}}(x_0) \subset B^{(\omega_n)}_{\rho_n}(x_0).
\end{cases}
\end{equation}
Note that for any $n\geqslant 0$, numbers $\omega_n$ are uniformly bounded. Indeed, since the function $\theta(\cdot)$ is non-decreasing, and $\beta<1$, we have
\begin{equation}\label{eq4.4}
\omega_n\leqslant \beta^{n}\,\omega_0+C_*\sum\limits_{i=0}^{n-1}\beta^{n-1-i}\theta(\rho_i)\leqslant \frac{1}{1-\beta}\Big(\omega_0+ C_*\,\theta(\rho_0)\Big),\quad\forall\,n\geqslant0.
\end{equation}
For any $0<r<\bar{\rho}<\rho_0$, define the numbers $j$, $m\in \mathbb{N}$ by the conditions
$$\frac{\rho_0}{2^{j}}\leqslant \bar{\rho}\leqslant \frac{\rho_0}{2^{j-1}},\quad \text{and}\quad \frac{\bar{\rho}}{2^{m}}\leqslant r\leqslant \frac{\bar{\rho}}{2^{m-1}}.$$
Now, we iterate \eqref{eq4.3} $m$ times, starting from step $j$, and apply \eqref{eq4.4} to obtain
\begin{equation}\label{eq4.5}
\begin{aligned}
\osc\limits_{B^{(\omega_{m+j})}_{\rho_{m+j}}(x_0)}\,u&\leqslant \beta^m\,\omega_j+\theta(\rho_j)\sum\limits_{i=0}^m\beta^i\leqslant
\beta^m\,\omega_j+ \frac{1}{1-\beta}\,\theta(\rho_j)
\\&\leqslant \gamma \Big(\frac{r}{\bar{\rho}}\Big)^{\alpha}\,\omega_j+\frac{1}{1-\beta}\,\theta(\bar{\rho})\leqslant \gamma \Big(\frac{r}{\bar{\rho}}\Big)^{\alpha}\,\Big[ \omega_0+ C_*\,\theta(\rho_0)\Big]+
\gamma \theta(\bar{\rho}),
\end{aligned}
\end{equation}
with $\alpha \in(0,1)$ depending only on $\beta$. For a fixed constant $\mu\in (0, 1)$, we define $\bar{\rho}$ as $\bar{\rho}:=r^{\mu}\,\rho^{1-\mu}_0$. With this choice, inequality \eqref{eq4.5} transforms into
\begin{equation}\label{eq4.6}
\osc\limits_{B^{(\nu_0)}_r(x_0)}\,u\leqslant \gamma \Big(\frac{r}{\rho_0}\Big)^{\alpha (1-\mu)}\,\Big[ \omega_0+ C_*\,\theta(\rho_0)\Big]+ \gamma \theta(r^\mu \rho^{1-\mu}_0),
\end{equation}
with $\nu_0=\frac{1}{1-\beta}\big[\omega_0+C_*\theta(\rho_0)\big]$. Since $\lim\limits_{r\rightarrow 0} \theta(r)=0$, it follows from \eqref{eq4.6} that $\osc\limits_{B^{(\nu_0)}_r(x_0)}\,u\to0$ as $r\to0$, ensuring the continuity of the solution $u$. This completes the proof of Theorem \ref{th1.1}.


\newpage

CONTACT INFORMATION

\medskip

\medskip
\textbf{Mariia O.~Savchenko}\\Institute of Applied Mathematics and Mechanics,
National Academy of Sciences of Ukraine, \\ \indent Batiouk Str. 19, 84116 Sloviansk, Ukraine\\
Vasyl' Stus Donetsk National University,
\\ \indent 600-richcha Str. 21, 21021 Vinnytsia, Ukraine
\\shan\underline{ }maria@ukr.net

\medskip
\textbf{Igor I.~Skrypnik}\\Institute of Applied Mathematics and Mechanics,
National Academy of Sciences of Ukraine, \\ \indent Batiouk Str. 19, 84116 Sloviansk, Ukraine\\
Vasyl' Stus Donetsk National University,
\\ \indent 600-richcha Str. 21, 21021 Vinnytsia, Ukraine\\ihor.skrypnik@gmail.com

\medskip
\textbf{Yevgeniia A. Yevgenieva}
\\ Max Planck Institute for Dynamics of Complex Technical Systems, \\ \indent Sandtorstrasse 1, 39106 Magdeburg, Germany
\\Institute of Applied Mathematics and Mechanics,
National Academy of Sciences of Ukraine, \\ \indent Batiouk Str. 19, 84116 Sloviansk, Ukraine\\yevgenieva@mpi-magdeburg.mpg.de

\end{document}